\newcommand{\HH}{\mathrm{H}}
\newcommand{\Z}{\mathbf{Z}}
\newcommand{\R}{\mathbb{R}}
\newcommand{\C}{\mathbf{C}}
\newcommand{\F}{\mathbf{F}}
\newcommand{\bF}{\mathbf{F}}
\newcommand{\mR}{\mathbb{R}}
\newtheorem{theorem}{Theorem}[section]
\newtheorem{proposition}[theorem]{Proposition}
\newtheorem{lemma}[theorem]{Lemma}
\theoremstyle{definition}
\newtheorem{example}[theorem]{Example}
\newtheorem{definition}[theorem]{Definition}
\newtheorem{remark}[theorem]{Remark}
\newcommand{\wmono}{ \ar@{>->}[r]}
\newcommand{\wmonovert}{ \ar@{>->}[d]}
\newcommand{\cof}{ \ar@{^{(}->}[r]}
\begin{document}

\title{Realizing doubles: a conjugation zoo}

\author{Wolfgang Pitsch}
\address{Universitat Aut\`onoma de Barcelona \\ Departament de Matem\`atiques\\
E-08193 Bellaterra, Spain}
\email{pitsch@mat.uab.es}


\author{J\'er\^ome Scherer}
\address{EPFL \\ Mathematics, Station 8 \\
CH-1015 Lausanne, Switzerland}
\email{jerome.scherer@epfl.ch}

\thanks{The authors are partially supported  by FEDER/MEC grant MTM2016-80439-P. The second
author would like to thank the MPI in Bonn for its hospitality in 2009.}
\subjclass{Primary 55P91; Secondary 57S17; 55S10; 55S35}


\keywords{Conjugation spaces, realization, Hopf invariant}
\newcommand{\W}{\mathcal{W}}
\newcommand{\A}{\mathcal{A}}

\begin{abstract}
Conjugation spaces are topological spaces equipped with an involution such that their fixed points have the same
mod $2$ cohomology (as a graded vector space, a ring, and even an unstable algebra) but with
all degrees divided by two, generalizing the classical examples of complex projective spaces
under complex conjugation. Spaces which are constructed from unit balls in complex Euclidean spaces 
are called spherical and are very well understood.
Our aim is twofold. We construct  ``exotic" conjugation spaces and study the realization question: which spaces 
can be realized as real loci, i.e., fixed points of conjugation spaces. We identify obstructions and provide examples of 
spaces and manifolds which cannot be realized as such.
\end{abstract}
\maketitle

\section*{Introduction}
Spaces with an involution, i.e. endowed with an action of the cyclic group of order $2$, abound in nature, and complex conjugation is one 
possible manifestation. Complex vector spaces, or their one point compactifications, complex projective spaces, Grassmannians, etc.
are all equipped with a conjugation action. They share the common feature that the subspace of fixed points is a scaled-down
version by a factor two. 
Hausmann, Holm, and Puppe noticed in their influential article \cite{MR2171799} that this unprecise statement can actually 
be phrased very precisely in terms of a  \emph{conjugation equation} relating the mod $2$ cohomology of the equivariant space 
and that of the fixed points. The main source of examples is given by so-called spherical conjugation spaces, i.e. built from
conjugation cells (unit balls in complex Euclidean spaces $\mathbb C^n$) via equivariant attaching maps.

The first question we address in this note is the existence of ``exotic conjugation spaces", by which me mean that they are not homotopy
equivalent to spherical ones.
We exhibit such exotic spaces, but notice that their exoticity vanishes when completed at~$2$. In fact, in view
of our recent characterization of conjugation spaces, \cite{PRS}, a better
way to understand this is to work in the stable $\mathbf C_2$-equivariant homotopy category. Smashing any conjugation space with $\HH\underline{\F}$, 
the genuine equivariant Eilenberg-Mac Lane spectrum  for constant $\mathbb{Z}/2$ coefficients,  we get a spectrum that splits as a wedge of copies 
of suspensions $\Sigma^{n\rho} \HH\underline{\F}$ where $\rho$ is the regular representation; in this stable setting any conjugation space 
behaves as if it were built from conjugation cells. 

The second 
question is about realization. Which spaces can be realized as
fixed points of a conjugation space (also called real loci)? The answer depends of course on the 
category we work in, whether it is in a smooth or  topological setting. 
In the topological context it is well known (and elementary,
\cite[Section~5]{MR2171799}) that any sphere $S^n$ can be seen as
the real locus of an even dimensional sphere, in fact the representation sphere $S^{2\rho}$. 
So, as a CW-complex, the smallest space which cannot be realized as a real
locus should have at least three cells. Relying on the famous Hopf
invariant one theorem proven by Adams, \cite{MR0141119}, we show
in Theorem~\ref{them:op2non}
that the octonionic projective plane $\mathbb{O}P^2$ is not a real locus.

\medskip

As a CW-complex $\mathbb{O}P^2$ is a three-cell complex, and this
counter-example is minimal from this point of view. It is not minimal however
from the point of view of the dimension. Since any surface, \cite{MR2171799}, and 
any orientable $3$-dimensional manifold is a (smooth) real locus, by work of Olbermann
\cite{MR2425140}, we start looking at $4$-dimensional spaces.
We show that all $4$-dimensional simply connected manifold are realizable as
real loci of $8$-dimensional conjugation \emph{spaces}.

\medskip

\noindent
{\bf Theorem~\ref{thm realisresult}.}
{\it The homotopy type of any simply connected and $4$-dimensional
smooth manifold is realizable as a real locus of some $8$-dimensional conjugation space.}

\medskip

We believe that not all $4$-dimensional manifolds can be realized as a smooth
real locus, but the obstruction would have to be of geometric nature. Homotopical
obstructions, namely the extension by Floyd, \cite{MR0334256}, of Adams' work to
complexes with four cells, help us to exhibit a $10$-dimensional manifold $Y$ which cannot be realized 
as the fixed points of a $20$-dimensional one. This dimension, $10$, is the best bound known at the moment.
Our numerous failed attempts to find a better one explain why this note took so long to see the light.

Let us conclude with an observation about the $5$-dimensional companion $Z$ of Floyd's manifold $Y$,
which looks like a real locus of $Y$ if there were an involution on~$Y$! We prove that there is
a conjugation action \emph{stably}.

\medskip

\noindent
{\bf Proposition~\ref{thm:Floyd}.}
{\it The suspension spectrum of the $5$-dimensional Floyd manifold $Z$ is the real locus of a $\C_2$-action on the suspension spectrum
of the $10$-dimensional Floyd manifold $Y$.}

\medskip

\noindent
{\bf Acknowledgements.} We would like to thank Jean-Claude Hausmann for asking some of the questions which
we answer in this note. We also thank Nick Kuhn for drawing our attention to Floyd's manifolds.

\section{Conjugation Spaces}
In all this work we will denote by $\C_2 = \langle \tau \, \mid \, \tau^2=1 \rangle$ the group with two elements and by $\bF$ the field with two elements. 
In this first section we briefly recall the definition of a conjugation space and introduce the most natural examples, namely the spherical conjugation spaces.
By convention a $\C_2$-space $X$ is a topological space with a $\C_2$-action, i.e. with a specific choice of an involution given by the action of the element $\tau$. 
By analogy with the conjugation action on the complex numbers, the subspace of fixed points $X^{\C_2}$ will be called the \emph{real locus} of $X$. To emphasize the
difference between smooth and topological categories we will speak of a ``smooth real locus'' when working in the smooth category.

Cohomology is always meant unreduced, and with coefficients in $\bF$, the field with two elements. To avoid unnecessarily cumbersome notation, we simply denote by $\HH^\ast(X)$ the cohomology algebra of a space $X$. We start with a naive definition of what the double of a space should be.
We will quickly see that one does not take into account enough structure.

\begin{definition}\label{def:double}
{\rm Let $Y$ be a connected space. A \emph{cohomological double} of $Y$ is a $\C_2$-space $X$ together with 
an additive isomorphism $\kappa \colon \HH^{2\ast}(X) \rightarrow \HH^\ast(X^{\C_2})$
dividing degrees by~$2$.
}
\end{definition}

This definition has almost no mathematical content and does not reflect at all the complex conjugation situation we have alluded to in the introduction: we require some more structured compatibility between the cohomology of $X$ and $X^{\C_2}$.
The inclusion $X^{\C_2} \hookrightarrow X$ induces a map in cohomology, but it is degree preserving. The way to be able to compare
theese spaces is via their Borel cohomology. 

Let $E\C_2$ be the universal space with $\C_2$-action, its space of orbits is the classifying space $B\C_2$, also known as the infinite 
real projective space $\mR P^\infty$. For any $\C_2$-space $X$, consider the Borel construction $X_{h\C_2}= (E\C_2 \times X)/\C_2$, where 
$\C_2$ acts diagonally on the product. Borel cohomology is defined as $\HH^\ast_{\C_2}(X) = \HH^\ast(X_{h\C_2})$. 
The restriction to ordinary cohomology $\rho\colon  \HH^\ast(X_{h\C_2}) \rightarrow \HH^\ast(X)$ is induced by the natural fiber inclusion
$X \hookrightarrow X_{h\C_2}$ for the projection $X_{h\C_2} \twoheadrightarrow \ast_{h\C_2}= B\C_2$. It relates Borel
cohomology with the ordinary cohomology of the space $X$, where we forget the involution.

There is a second important map, namely the restriction to the Borel cohomology of the fixed points 
$r\colon\HH^\ast(X_{h\C_2}) \rightarrow \HH^\ast((X^{\C_2})_{h\C_2})$. Since $\C_2$ acts trivially on $X^{\C_2}$, the Borel construction 
$(X^{\C_2})_{h\C_2}$ is simply $B\C_{2} \times X^{\C_2}$, and the classical K\"unneth theorem tells us that the graded ring $\HH^\ast((X^{\C_2})_{h\C_2})$ is 
isomorphic to $\HH^\ast(X^{\C_2})[u]$, a polynomial ring  in one variable $u$  of cohomological degree $1$ with coefficients in 
the ordinary mod $2$ cohomology ring of~$X^{\C_2}$.




We are ready now for a more structured version of Definition~\ref{def:double}, namely that of a conjugation space.

\begin{definition}\label{def:conjspaceHHP} \cite[Section~3.1]{MR2171799}
{\rm A \emph{conjugation space} is a $\C_2$-space equipped with an $\HH^*$-frame $(\kappa,\sigma)$, i.e. additive maps
\begin{enumerate}
 \item[a)] $\kappa \colon \HH^{2\ast}(X) \rightarrow \HH^\ast(X^{\C_2})$, an isomorphism dividing degrees by~$2$,

\item[b)] $\sigma \colon \HH^{2\ast}(X) \rightarrow \HH^{2\ast}(X_{h\C_2})$, a section
of $\rho \colon \HH^{2\ast}(X_{h\C_2}) \rightarrow \HH^{2\ast}(X)$,
\end{enumerate}
which satisfy the conjugation equation:
\[
r \circ \sigma(x) = \kappa(x) u^m + lt_m
\]
for all $x \in \HH^{2m}(X)$ and all $m \in \mathbf{N}$, where $lt_m$ is a polynomial in the variable $u$ of 
degree strictly less than $m$.}
\end{definition}

\begin{remark}
\label{rem:smoothdef}
{\rm One defines likewise a \emph{conjugation manifold} as a manifold equipped with a smooth $\C_2$-action
turning it into a conjugation space.
}
\end{remark}

\begin{example}[Conjugation spheres]
\label{exa:firstexample}\cite[Example~3.6]{MR2171799}
Consider the field of complex  numbers $\mathbb C$ as a $\C_2$-space via conjugation and the complex euclidean space $\mathbb C^n$
for any integer $n \geq 0$. Its one-point compactification is a $2n$-dimensional sphere on which $\C_2$ acts by reflection through the equator. 
The fixed points is precisely the equator $S^n$. This is trivially seen to be a conjugation space. From the equivariant stable perspective, \cite{PRS}, 
this is equally obvious since the sphere we just described is precisely the representation sphere $S^{n \rho}$ where $\rho$ is the regular representation,
sum of the trivial and the sign representation. As it is now standard, if $V$ is a finite dimensional real representation of $\C_2$, we denote by $S^V$ its one-point compactification.
%
\end{example}

\begin{example}[Spherical spaces]
\label{ex:spherical}\cite[Section~5.2]{MR2171799}
A \emph{spherical} conjugation space is a $\C_2$-space $X$ that is equipped with an exhaustive filtration
$\ast \subset X_0 \subset X_1 \subset \dots$ such that $X_n$ is obtained from $X_{n-1}$ by attaching
conjugation cells (homeomorphic to unit balls in $\mathbb{C}^n$) via equivariant attaching maps defined on the boundary,
which is a representation sphere $S^{n\rho -1}$. Conjugation spheres as defined above are spherical, and so are 
complex projective spaces $\mathbb{C} P^n$.
\end{example}

The conjugation equation is the technical ingredient which makes the concept of conjugation
space so interesting. As the following simple example shows the sole identification of the space of fixed points with 
``half the original space" is not enough to give the nice properties provided by an $\HH^*$-frame, see also
\cite[Example~1]{MR2198191} for a similar example.

\begin{example}\label{exa:secondexample}
Consider the space $X = S^2 \vee S^4$. We define two actions of $\C_2$
in the following way. The first one is by identifying $S^2$ and $S^4$ as compactified $\mathbb{C}$ and $\mathbb{C}^2$
equipped with complex conjugation. This forms indeed a (spherical) conjugation space with $X^{\C_2} = S^1 \vee S^2$.

The second action is trivial on $S^2$ and acts on a point $(x_1, x_2, x_3, x_4)$ in compactified $\R^4$
by $\tau (x_1, x_2, x_3, x_4) = (x_1, -x_2, -x_3, -x_4)$ so that $(S^4)^{\C_2} = S^1$. Here as well
$X^{\C_2} = S^1 \vee S^2$, so that this equivariant space is a double of $S^1 \vee S^2$ as defined in 
Definition~\ref{def:double}, but not a conjugation space. It is not difficult to see that the conjugation equation 
cannot hold, but it is even more direct to look at the genuine equivariant spectrum $\HH \underline{\F} \wedge X$ and observe that it is precisely
$\Sigma^{2} \HH \underline{\F} \vee \Sigma^{3\rho -2} \HH  \underline{\F}$, hence $X$ is not pure (\cite{PRS}).
\end{example}

One important property of an $\HH^*$-frame is
that it is compatible with the product in cohomology, as shown already by
Hausmann, Holm, and Puppe.

\begin{theorem}\cite[Theorem~3.3]{MR2171799}
\label{thm:ring}
Let $X$ be a conjugation space. The morphisms $\kappa$ and $\sigma$ in the $\HH^*$-frame are ring homomorphisms.
\end{theorem}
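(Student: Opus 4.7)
The plan is to establish multiplicativity of $\sigma$ first; multiplicativity of $\kappa$ will then fall out by comparing leading $u$-coefficients. The normalizations $\sigma(1)=1$ and $\kappa(1)=1$ are immediate from the conjugation equation in degree zero, since $\HH^0_{\C_2}(X)=\F$ and $\rho$ is the identity there.

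Fix $x\in \HH^{2m}(X)$ and $y\in \HH^{2n}(X)$. Because $\rho$ and $r$ are ring homomorphisms, $\sigma(x)\sigma(y)$ lifts $xy$ under $\rho$, and direct expansion gives
\[
r(\sigma(x)\sigma(y))=(\kappa(x)u^m+lt_m)(\kappa(y)u^n+lt_n)=\kappa(x)\kappa(y)\,u^{m+n}+\text{(lower-order terms in }u\text{)},
\]
so $r(\sigma(x)\sigma(y))$ is a polynomial in $u$ of degree $\leq m+n$, of the same shape as $r(\sigma(xy))$. The theorem therefore reduces to the following uniqueness principle: \emph{for each $z\in \HH^{2k}(X)$, there is at most one class $\xi\in \HH^{2k}_{\C_2}(X)$ with $\rho(\xi)=z$ and with $r(\xi)$ a polynomial in $u$ of degree $\leq k$.} Applying this to the pair $\sigma(xy)$ and $\sigma(x)\sigma(y)$ yields $\sigma(xy)=\sigma(x)\sigma(y)$; reading off the coefficient of $u^{m+n}$ then gives $\kappa(xy)=\kappa(x)\kappa(y)$.

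To prove the uniqueness principle I would analyze the Serre spectral sequence of the Borel fibration $X\to X_{h\C_2}\to B\C_2$. As a preliminary lemma, I would establish that $\HH^{\mathrm{odd}}(X)=0$ for any conjugation space, a structural consequence of the $\HH^*$-frame axioms. The $E_2$-page is then concentrated in even rows; since every even class $c\in \HH^{2m}(X)$ survives to $E_\infty^{0,2m}$ via its lift $\sigma(c)$, all differentials vanish on the fiber column, and by $\F[u]$-linearity (the base generator $u$ is a permanent cycle) they vanish everywhere, so the spectral sequence collapses at $E_2$. Using the sections $\sigma$, every class $\xi\in \HH^{2k}_{\C_2}(X)$ then admits a unique expansion $\xi=\sum_{m\leq k}u^{2(k-m)}\sigma(c_m)$ with $c_m\in \HH^{2m}(X)$. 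The condition $\rho(\xi)=z$ pins down $c_k=z$, and then an induction on $m$: extracting the coefficient of $u^{2k-m}$ from $r(\xi)$, using that for $m<k$ we have $2k-m>k$ and that contributions from $\sigma(c_{m'})$ with $m'<m$ have already been killed, forces $\kappa(c_m)=0$ hence $c_m=0$. Therefore $\xi=\sigma(z)$, as required.

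The main obstacle is the structural preliminary $\HH^{\mathrm{odd}}(X)=0$, which is not formal from the additivity of $\kappa$ alone and must be extracted genuinely from the conjugation equation combined with the $\F[u]$-module structure on Borel cohomology. Once that is in place, the collapse of the Serre spectral sequence, the basis description of $\HH^{2*}_{\C_2}(X)$, and the multiplicativity of both $\sigma$ and $\kappa$ reduce to a mechanical coefficient comparison.
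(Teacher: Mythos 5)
The paper does not actually prove this statement; it quotes it from Hausmann--Holm--Puppe \cite[Theorem~3.3]{MR2171799}, and your argument is essentially their original proof: reduce multiplicativity of both $\sigma$ and $\kappa$ to the uniqueness of a lift $\xi$ of $z$ with $\deg_u r(\xi)\leq k$, and prove that uniqueness from the fact that $\HH^*_{\C_2}(X)$ is a free $\F[u]$-module on $\sigma$ applied to a basis of $\HH^*(X)$ (Leray--Hirsch), extracting top $u$-coefficients via the conjugation equation. Your coefficient bookkeeping in the induction is correct. The one point to fix is the status of what you call ``the main obstacle'': the vanishing of $\HH^{\mathrm{odd}}(X)$ is not something to be extracted from the frame axioms --- in \cite{MR2171799} a conjugation space is \emph{by definition} a space with $\HH^{\mathrm{odd}}(X)=0$ admitting an $\HH^*$-frame, and the present paper silently relies on this (e.g.\ in Remark~\ref{prop 2compete}) even though Definition~\ref{def:conjspaceHHP} omits it from its restatement. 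Nor should you expect to derive it: the axioms only constrain $\HH^{2*}(X)$ and $\HH^*(X^{\C_2})$, and odd-degree classes carried, say, on a portion of $X$ where the involution acts freely are invisible both to $\kappa$ and to the conjugation equation, which lives in $\HH^*(X^{\C_2})[u]$. So evenness should simply be added to the hypotheses, where it belongs, rather than left as a gap; with that adjustment your proof is complete and coincides with the standard one.
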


Even more is true. Franz and Puppe proved in \cite{MR2198191} that $\kappa$ is also compatible with
the action of the mod $2$ Steenrod algebra $\mathcal A$.

\begin{theorem}\cite[Theorem~1.3]{MR2198191}
\label{thm:Steenrod}
Let $X$ be a conjugation space. For any element $x \in \HH^{2n}(X; \bF)$, one has $\kappa (Sq^{2k} x) = Sq^k \kappa(x)$.
\end{theorem}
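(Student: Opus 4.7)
The plan is to apply the Steenrod square $Sq^{2k}$ to the conjugation equation and then extract, from the resulting identity in $\HH^*((X^{\C_2})_{h\C_2}) \cong \HH^*(X^{\C_2})[u]$, the coefficient of the power of $u$ that encodes $\kappa(Sq^{2k}x)$. Concretely, I would start from
\[
r\sigma(x) = \kappa(x) u^m + lt_m
\]
for $x \in \HH^{2m}(X)$ and apply $Sq^{2k}$ to both sides. Since $r$ is induced by a map of spaces, it commutes with Steenrod operations, so the left hand side becomes $r(Sq^{2k}\sigma(x))$. On the right, I would expand everything via the Cartan formula together with the identity $Sq^j(u^m) = \binom{m}{j} u^{m+j}$ valid in $\HH^*(B\C_2)$. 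The contribution of $Sq^{2k}(\kappa(x) u^m)$ to the coefficient of $u^{m+k}$ is exactly $\binom{m}{k} Sq^k\kappa(x)$, alongside terms at other $u$-powers coming from $Sq^i\kappa(x)$ for $i \neq k$ and from $Sq^{2k}(lt_m)$.

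Next, I would write down the conjugation equation for $Sq^{2k} x \in \HH^{2(m+k)}(X)$, which reads
\[
r\sigma(Sq^{2k} x) = \kappa(Sq^{2k} x)\, u^{m+k} + lt_{m+k},
\]
so that the coefficient of $u^{m+k}$ on this side is precisely $\kappa(Sq^{2k}x)$. Matching the two coefficient expressions would give the desired identity, but the two lifts $Sq^{2k}\sigma(x)$ and $\sigma(Sq^{2k}x)$ of $Sq^{2k}x$ along $\rho$ need not coincide in $\HH^*(X_{h\C_2})$; their difference lives in $\ker\rho$, which is the ideal generated by $u$. To identify the two $u^{m+k}$-coefficients, I would invoke the uniqueness properties of the $\HH^*$-frame established in \cite{MR2171799}, subtracting an explicit $u$-divisible correction to bring $Sq^{2k}\sigma(x)$ into the form $\sigma(Sq^{2k}x)$ modulo terms whose image under $r$ is pushed to strictly lower $u$-degrees.

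The cleanest way to close the argument is by induction on $m$. The inductive hypothesis $\kappa\circ Sq^{2j} = Sq^j \circ \kappa$ on classes of smaller degree allows the correction terms arising from $Sq^{2k}(lt_m)$ to be recognised as $\kappa$ applied to Steenrod operations on classes of $X$, which then match the corresponding contributions in $lt_{m+k}$. The main obstacle will be the combinatorial bookkeeping: controlling the interplay between the Cartan formula, the binomial coefficients $\binom{m}{j}$ that may vanish modulo $2$, and the inductive hypothesis, so that after cancellation the coefficient of $u^{m+k}$ cleanly reduces to the identity $\kappa(Sq^{2k}x) = Sq^k\kappa(x)$.
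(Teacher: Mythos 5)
First, a point of comparison: the paper does not prove this statement at all. It is quoted from Franz and Puppe \cite{MR2198191}, and the authors only add the remark that it also follows from naturality in the $\C_2$-equivariant stable category as developed in \cite{PRS}, where $\kappa$ is identified with a map induced by the geometric fixed points functor and the relation between $Sq^{2k}$ and $Sq^k$ becomes a universal fact about $\HH\underline{\F}$-modules. So there is no in-paper argument to match yours against; your proposal has to stand on its own, and as written it does not.

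The genuine gap is in the step where you extract the coefficient of $u^{m+k}$. By the Cartan formula, the contribution of $Sq^{2k}(\kappa(x)u^m)$ to that coefficient is $\binom{m}{k}Sq^k\kappa(x)$, and this binomial coefficient is very often zero mod $2$. Take for instance $X=\mathbb{C}P^2\times\mathbb{C}P^2$ with real locus $\mathbb{R}P^2\times\mathbb{R}P^2$, $x=\alpha\beta\in\HH^4(X)$ the product of the two degree-two generators, and $k=1$: the theorem asserts $\kappa(Sq^2 x)=Sq^1(ab)=a^2b+ab^2\neq 0$, while your ``main term'' is $\binom{2}{1}Sq^1\kappa(x)=0$. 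So in such cases the entire content of the identity is carried by exactly the terms you defer to ``combinatorial bookkeeping'': the contributions of $Sq^{2k}(lt_m)$ at $u$-degree $m+k$, and the discrepancy between the two lifts $Sq^{2k}\sigma(x)$ and $\sigma(Sq^{2k}x)$. That discrepancy is also not disposed of by the uniqueness of the $\HH^*$-frame in the way you suggest: $\sigma(Sq^{2k}x)$ is characterized among lifts of $Sq^{2k}x$ by the property that its image under $r$ has $u$-degree at most $m+k$, whereas $r(Sq^{2k}\sigma(x))=Sq^{2k}(r\sigma(x))$ has terms up to $u$-degree $m+2k$; the difference of the two lifts lies in $\bigoplus_{i\geq 1}u^{2i}\sigma(\HH^{2m+2k-2i}(X))$ and its image under $r$ contributes to the $u^{m+k}$-coefficient through \emph{lower-order} terms of the conjugation equation, which are not controlled by $\kappa$ and hence not reachable by your induction on $m$. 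In short, the first-order coefficient comparison is the natural first attack but provably cannot close by itself; Franz and Puppe's actual argument (and the stable one in \cite{PRS}) has to package all $u$-degrees at once rather than isolate the single coefficient you propose.
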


These two properties follow also from the naturality of the equivariant stable homotopical viewpoint
as we explain in \cite{PRS}.

\begin{remark}\label{rem:2-complexes}
Hambleton and Hausmann observe that any surface is the real locus of a conjugation $4$-manifold, \cite[Introduction]{HHMathZ}.
In particular the real projective plane $\mathbb R P^2$ is the real locus of the complex projective plane $\mathbb CP^2$.
As an equivariant space, the latter is the homotopy cofiber of the Hopf map $S^{2\rho-1} \rightarrow S^\rho$, the fixed points of which
is the multiplication by $2$ on $S^1$.
\end{remark}

Looking at $\mathbb RP^2$ as associated to the presentation $\langle x \mid x^2 \rangle$, we generalize this to $2$-dimensional complexes 
corresponding to presentations of groups where all relations are \emph{squares}. We do not know what happens for arbitrary fundamental groups. 
Given a presentation $G \cong \langle I \mid R \rangle$ we denote by $X_G$ the associated presentation complex $\bigvee_X S^1 \cup \bigvee_R e^2$
with fundamental group~$G$.

\begin{proposition}
\label{prop:relation}
Let $G$ be a group admitting a presentation where all relations are squares. Then $X_G$ is a real locus.
\end{proposition}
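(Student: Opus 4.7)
The plan is to build $X_G$ as the real locus of a spherical conjugation space, extending the $\mathbb{C}P^2$ example by attaching conjugation $2\rho$-cells whose fixed points realize each squared relation. Write $G = \langle I \mid R \rangle$ with every $r \in R$ of the form $r = w_r^2$, and start with the spherical conjugation space $W = \bigvee_{i \in I} S^\rho$, whose real locus is $\bigvee_{i \in I} S^1$, the $1$-skeleton of $X_G$. The goal is then, for each $r = w_r^2$, to produce an equivariant attaching map $\phi_r \colon S^{2\rho-1} \to W$ whose restriction to fixed points realizes the word $w_r^2$ as a loop in $\bigvee_I S^1$. Attaching conjugation cells $D^{2\rho}$ along all such $\phi_r$ yields a spherical conjugation space $Y$ (in the sense of Example~\ref{ex:spherical}), and the real locus functor, applied cellwise, gives $Y^{\mathbb{C}_2} = \bigvee_I S^1 \cup_{\{w_r^2\}} \bigvee_R e^2 = X_G$.

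The heart of the argument is the construction of $\phi_r$. Pick any pointed representative $f_w \colon S^1 \to \bigvee_I S^1$ of the word $w = w_r$ (with trivial $\mathbb{C}_2$-action on source and target), and smash with the identity of $S^\sigma$, where $\sigma$ denotes the sign representation, to obtain an equivariant map
\[
f_w \wedge \mathrm{id}_{S^\sigma} \colon S^1 \wedge S^\sigma = S^\rho \longrightarrow \Bigl(\bigvee_I S^1\Bigr) \wedge S^\sigma = \bigvee_I S^\rho,
\]
whose $\mathbb{C}_2$-fixed points (since fixed points commute with smash here) recover $f_w$ on $(S^\sigma)^{\mathbb{C}_2} = S^0$. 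Now precompose with the equivariant Hopf map $\eta \colon S^{2\rho-1} \to S^\rho$ of Remark~\ref{rem:2-complexes}, whose restriction to fixed points is the degree~$2$ self-map of $S^1$. The resulting equivariant map $\phi_r = (f_w \wedge \mathrm{id}_{S^\sigma}) \circ \eta$ has fixed point map $S^1 \xrightarrow{\times 2} S^1 \xrightarrow{f_w} \bigvee_I S^1$, which represents $w^2 = r$ in $\pi_1(\bigvee_I S^1)$.

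The main potential obstacle is verifying that the real locus of the equivariant pushout defining $Y$ really is the ordinary pushout of real loci. This is standard for attachments along $\mathbb{C}_2$-cofibrations of free $\mathbb{C}_2$-CW pairs (and $S^{2\rho-1} \hookrightarrow D^{2\rho}$ is such a pair away from its fixed-point subpair $S^1 \hookrightarrow D^2$), so the fixed-point subspace of each attached conjugation cell is exactly a $2$-disk glued along $\phi_r^{\mathbb{C}_2}$; all other cells of $D^{2\rho}$ are free and contribute nothing to the fixed points. Assembling over $r \in R$ therefore identifies $Y^{\mathbb{C}_2}$ with the presentation complex $X_G$, which is what we wanted.
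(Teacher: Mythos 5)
Your proof is correct and follows essentially the same strategy as the paper's: attach conjugation $2\rho$-cells to $\bigvee_I S^\rho$ along the equivariant Hopf map $S^{2\rho-1}\to S^\rho$ (whose fixed-point map is the degree $2$ self-map of $S^1$) followed by an equivariant map $S^\rho\to\bigvee_I S^\rho$ realizing the word $w$, so that the fixed-point attaching map represents $w^2$. The only cosmetic difference is how that word-realizing map is produced: you smash a non-equivariant representative $f_w$ with the identity of the sign-representation sphere, while the paper uses a pinch map along $k$ meridians followed by an identification of wedge summands; both restrict to $f_w$ on fixed points.
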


\begin{proof}
We construct a simply connected $4$-dimensional conjugation space $X$ as follows.
We start with a wedge of as many $2$-spheres $S^\rho$ as generators in $G$, say $I$, and attach a conjugation $4$-cell for
each relation $w^2$, where $w$ is a word in the free group on $I$. If $w$ has length $k$, the attaching map is the composition
\[
S^{2\rho -1} \xrightarrow{\eta} S^\rho \xrightarrow{p} \bigvee_k S^\rho \xrightarrow{F} \bigvee_I S^\rho
\]
where $p$ pinches the sphere along $k$ meridians and $F$ identifies the $j$-th pinched sphere with the sphere
corresponding to the $j$-th letter in the word $w$. All maps are equivariant and on fixed points they restrict to
\[
S^{1} \xrightarrow{2} S^1 \xrightarrow{p} \bigvee_k S^1 \xrightarrow{f} \bigvee_I S^1
\]
where the description of $f$ is analogous to that of~$F$. This composite map represents precisely $w^2$. Therefore
$X_G$ is the real locus of~$X$. 
\end{proof}

\begin{example}\label{ex:dihedral}
{\rm There is a presentation $\langle x, y \, \mid \, x^2, y^2, (xy)^4 \rangle$ for the dihedral group $D_8$ of order $8$. 
Thus there is a conjugation space whose fixed points has $D_8$ as a
fundamental group.
}
\end{example}

\section{Exotic conjugation spaces }
In this section we present three different recipes for constructing exotic conjugation spaces. By exotic
we mean non-spherical and we wish to stress that these examples predate our work on purity. This
short section serves thus simply as motivation to find a better setting to study the structure of conjugation spaces.
As we briefly indicate in the concluding Remark~\ref{prop 2compete} below this motivated our work with
Ricka in \cite{PRS} where we characterize conjugation spaces stably and equivariantly, including the
exotic examples we present now.

Our first example takes advantage of the fact that $p$-torsion for $p$ odd is not seen by an $\HH^*$-frame.

\begin{example}
{\rm Let $X$ be a simply connected $p$-torsion space, such as a Moore space $M(\Z/p, n)$
with $n \geq 2$. We equip $X$ with the trivial $\C_2$-action. Since $\HH^*(X) \cong \bF$
is concentrated in degree zero, $X$ is a conjugation space. Likewise, any conjugation
space can be ``made exotic'' by adding $p$-torsion, for example by wedging it with a Moore space.
}
\end{example}

There are other spaces which are not seen by ordinary homology, even with coefficients in the integers.
It is possible sometimes to equip such spaces with a non-trivial action which upgrades them to conjugation
spaces.

\begin{example}
{\rm We start from the construction of Berrick and Casacuberta of a universal acyclic space, given
as a ``wedge of gropes" \cite{MR1670384}. Here $X$ is obtained as a telescope of wedges of
circles. Let $X_n = \bigvee_{1}^{2^n} S^1$ for $n \geq 1$ be a wedge of $2^n$ copies of the circle. 
The map $f\colon S^1 \rightarrow S^1 \vee S^1$ is induced by the commutator map $\Z \rightarrow \Z * \Z$ 
on fundamental groups and $f_n\colon X_n \rightarrow X_{n+1}$
is the wedge of $2^n$ copies of $f$. Define the space $X$ as the homotopy colimit of
\[
X_1 \xrightarrow{f_1} X_2  \xrightarrow{f_2} X_3  \xrightarrow{f_3} \dots
\]
It is an acyclic space, that is $\tilde \HH_*(X; \Z) = 0$ and so $\HH^*(X) \cong \bF$. We define the
action of $\C_2$ on $X_1=S^1 \vee S^1$ and likewise on $X_{n+1} = X_n \vee X_n$ 
by requiring that $\tau$ exchanges the two wedge components. The telescope is hence an equivariant 
diagram and the homotopy colimit inherits an action of $\C_2$. The fixed points $X^{\C_2}$ consists in the 
base point only. Thus $X$ is a conjugation space, the $\HH^*$-frame is the trivial one, defined on $\HH^0$
only.}
\end{example}

Our last example is a particular example of the conjugation spheres Olbermann studies very thoroughly
in \cite{MR2728480}. He shows that every $\bF$-homology $3$-sphere is the fixed point set of an involution 
on $S^6$.

\begin{example}
\label{exm Poincare}
{\rm Let $S$ be the Poincar\'e sphere. We let $X = \Sigma^3 S \simeq S^6$ and define the
action of $\C_2$ as follows. If $(t_1, t_2, t_3, s) \in I^3 \times S$ represents a point in $X$,
where $I = [-1, 1]$, then $\tau(t_1, t_2, t_3, s) = (-t_1, -t_2, -t_3, s)$. Thus $X^{\C_2} = S$. The
mod $2$ cohomology of $S$ is an exterior algebra on one generator in degree $3$ and it
is easy to check the conjugation equation.}
\end{example}

\begin{remark}
\label{prop 2compete}
{\rm Bousfield-Kan $2$-completion, \cite{MR0365573}, kills the exoticity of our examples. 
The first $2$ are acyclic, so $X^\wedge_2$ is contractible, and in Example~\ref{exm Poincare} we have $S^\wedge_2 \simeq S^3$. 

In general, let $X$ be a $2$-complete conjugation space of finite type.
The mod $2$ cohomology of $X$ is concentrated in even degree, so that $X$ must be simply connected by the connectivity
Lemma \cite[6.1]{MR0365573}. The Bockstein spectral sequence collapses at the $E_2$-term, which shows that
$\HH_*(X; \Z)$ is torsion free. Thus $X$ is simply connected and torsion free, which means that, not taking into account the
$\C_2$-action, $X$ is equivalent to a $CW$-complex constructed with even dimensional cells.
It makes thus more sense to work with $2$-complete spaces. 
We believe however that the most adequate framework is the $\C_2$-equivariant stable homotopy category and refer to \cite{PRS} 
for a complete characterization in terms of purity in the sense of \cite{MR3505179}. Instead of working in the unstable category and looking at an unstable 
homological localization functor, we realized that the information given in the definition of a conjugation space is stable as it only depends 
on the equivariant spectrum $\HH \underline{\F} \wedge X$.
}
\end{remark}

\section{The Realizability question in dimension 4}
Given a space $Y$ we are looking for a conjugation space $X$ such that $Y \simeq X^{\C_2}$.
We will provide in Section~\ref{sec:nonrealizable} examples of spaces that cannot be realized as the real locus of a 
conjugation space, but our first aim is to present some positive results for spaces $Y$ of dimension four. 
We will show that any simply connected $4$-dimensional manifold can be realized topologically as a real locus,
therefore we will need to understand equivariant attaching maps for $8$-dimensional cells on the $4$-skeleton.
In the next lemma we write $[-,-]^{\C_2}$ for equivariant homotopy classes of maps.

\begin{lemma}\label{lem rest}
The restriction map $[S^{4\rho -1}, S^{2 \rho} \vee S^{2 \rho}]^{\C_2} \rightarrow \pi_3(S^2 \vee S^2)$ induced by taking fixed
points is surjective.
\end{lemma}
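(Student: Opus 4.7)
The plan is to exhibit equivariant lifts of a set of generators of $\pi_3(S^2 \vee S^2)$. Since $4\rho$ contains a four-dimensional trivial summand, $S^{4\rho-1}$ is equivariantly a four-fold (unreduced) suspension with trivial action on the suspension coordinates, so $[S^{4\rho-1},-]^{\C_2}$ takes values in abelian groups and the restriction map is a group homomorphism. By Hilton's theorem, $\pi_3(S^2 \vee S^2)$ is free abelian of rank three, with generators the two Hopf maps $\eta_1,\eta_2$ into each wedge summand and the Whitehead product $[\iota_1,\iota_2]\colon S^3 \to S^2 \vee S^2$. It therefore suffices to lift each of these three classes.

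To lift $\eta_i$, I would put a $\C_2$-action on $\mathbb{H}$ via the $\mathbb{R}$-algebra involution that fixes $1$ and $i$ and sends $j,k$ to $-j,-k$. This is inner, given concretely by $q \mapsto iqi^{-1}$, so the diagonal extension to $\mathbb{H}^2$ makes the quaternionic Hopf map $h\colon S^7 \to \mathbb{H}P^1 = S^4$, $(q_1,q_2) \mapsto q_1 q_2^{-1}$, equivariant. As a real $\C_2$-representation $\mathbb{H} = 2\rho$ (the fixed subspace is $\mathbb{C}\langle 1,i\rangle$), so $\mathbb{H}^2 = 4\rho$ and $\mathbb{H}P^1 = S^{2\rho}$, identifying $h$ with an equivariant map $S^{4\rho-1}\to S^{2\rho}$. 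On fixed points $(q_1,q_2) \in \mathbb{C}^2$, $h$ restricts to the classical complex Hopf map $S^3 \to S^2 = \mathbb{C}P^1$. Composing with either inclusion $S^{2\rho} \hookrightarrow S^{2\rho} \vee S^{2\rho}$ then provides equivariant lifts of $\eta_1$ and $\eta_2$.

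To lift $[\iota_1,\iota_2]$, I would use the standard equivariant CW structure on the product $S^{2\rho} \times S^{2\rho}$: a $0$-cell, two cells of dimension $2\rho$ whose union is the $2\rho$-skeleton $S^{2\rho} \vee S^{2\rho}$, and a single top cell of dimension $4\rho$ attached by the equivariant Whitehead product $[\iota_{2\rho},\iota_{2\rho}] \in [S^{4\rho-1}, S^{2\rho} \vee S^{2\rho}]^{\C_2}$. Taking $\C_2$-fixed points gives $S^2 \times S^2$ with its usual CW decomposition, so the attaching map of the top cell restricts on fixed points to the classical Whitehead product $[\iota_1,\iota_2]$.

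The main subtle point is in the first construction: one must work with an inner automorphism of $\mathbb{H}$, rather than an anti-automorphism such as quaternionic conjugation, in order for the formula $(q_1,q_2) \mapsto q_1 q_2^{-1}$ to be $\C_2$-equivariant. Once this is in place, the three equivariant classes constructed above restrict on fixed points to a generating set of $\pi_3(S^2 \vee S^2)$, and surjectivity follows.
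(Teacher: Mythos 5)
Your proposal is correct and follows essentially the same route as the paper: Hilton's decomposition of $\pi_3(S^2\vee S^2)$ into the two Hopf classes and the Whitehead product, an equivariant model of the quaternionic Hopf map obtained from a multiplicative (inner) order-two automorphism of $\mathbb{H}$ with fixed subalgebra $\mathbb{C}$, and the top cell of $S^{2\rho}\times S^{2\rho}$ for the Whitehead product. The only cosmetic difference is your choice of the involution $q\mapsto iqi^{-1}$ where the paper uses simultaneous complex conjugation on $\mathbb{C}\oplus\mathbb{C}j$ (conjugation by $j$); your explicit remark on why an automorphism rather than an anti-automorphism is needed is a nice clarification of the same point.
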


\begin{proof}
The group $\pi_3(S^2\vee S^2)$ contains two maps $\eta_i\colon
S^3 \xrightarrow{\eta} S^2 \hookrightarrow S^2 \vee S^2$, where $eta$ is the Hopf map and the index $i$ is $1$ or $2$
depending on which wedge summand inclusion is used.
From the Hilton-Milnor Theorem, \cite{MR0068218}, we know that $\pi_3(S^2\vee S^2)$ is generated
by $\eta_1, \eta_2$, and the Whitehead product  $\omega\colon S^3 \rightarrow S^2 \vee S^2$. By additivity of 
the restriction map, it is enough to show that both types of generators are in its image.

Recall that the Hopf map $\nu\colon S^7 \rightarrow S^4$ can be
constructed as follows. View $S^7$ as the unit sphere in the
quaternionic plane $\mathbb{H}^2$ and endow it with the natural
translation action by the unit quaternions $S^3$. Then the
Hopf map $\nu$ is the quotient map $S^7 \rightarrow S^7/S^3$. Now, the
quaternions can be viewed as $\mathbb{C} \oplus \mathbb{C}j$, and
simultaneous conjugation of both copies of $\mathbb{C}$ gives a
$\C_2$-action on $\mathbb{H}$ and hence on $\mathbb{H}^2$. A direct
computation shows that the induced action of the unit sphere is
compatible with that of the unit quaternions, and that the induced
action on the quotient $S^4$ is the spherical one. This provides in particular 
an equivariant model for the Hopf map $\nu$ in $[S^{4\rho -1}, S^{2 \rho}]$. 
The induced map on fixed point sets is the previous Hopf
map $\eta\colon S^3 \rightarrow S^2$. Moreover, both inclusions
$S^2 \hookrightarrow S^2 \vee S^2$ can be realized as real loci of
the corresponding inclusions $S^{2 \rho} \hookrightarrow S^{2 \rho} \vee S^{2 \rho}$
of spherical conjugation spaces.

It is even simpler to deal with the Whitehead product since
$S^{2 \rho} \times S^{2 \rho}$ is a spherical conjugation space obtained from the
wedge $S^{2 \rho} \vee S^{2 \rho}$ by attaching an $8$-dimensional cell
along the Whitehead product $W = [\iota_1, \iota_2]$, where $\iota_1$
and $\iota_2$ are the wedge inclusions. The fixed point set of this
product is $S^2 \times S^2$, which shows that the restriction of the
Whitehead bracket $W$ is $\omega$.
\end{proof}

We are now ready for our realizability result for $4$-manifolds, and in fact a little
more since we can realize any attaching map of a $4$-cell to a wedge of $2$-spheres,
not only those which yield a CW-complex having the homotopy type of a $4$-manifold. 

\begin{theorem}\label{thm realisresult}
The homotopy type of any simply connected $4$-dimensional smooth
compact manifold is realizable as a real locus of an $8$-dimensional
conjugation space.
\end{theorem}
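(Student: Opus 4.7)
The plan is to view the theorem as a direct application of the equivariant lifting provided by Lemma~\ref{lem rest}, modulo extending that lemma from two wedge summands to arbitrarily many. The starting point is that any simply connected closed smooth $4$-manifold $M$ has the homotopy type of a CW-complex with one $0$-cell, $k = b_2(M)$ cells in dimension $2$, and a single $4$-cell; this is standard (via Morse theory on the smooth structure, or Milnor--Wall). Write $M \simeq \bigvee_{i=1}^k S^2 \cup_\alpha D^4$ for some attaching map $\alpha \in \pi_3(\bigvee_k S^2)$. The task then becomes: lift $\alpha$ to an equivariant attaching map $\tilde\alpha \colon S^{4\rho-1} \to \bigvee_k S^{2\rho}$, and attach a single conjugation $8$-cell (a copy of $D^{4\rho}$) to the wedge of spherical conjugation spheres $\bigvee_k S^{2\rho}$ along $\tilde\alpha$.

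The key step is therefore to upgrade Lemma~\ref{lem rest} to the statement that the restriction map
\[
[S^{4\rho -1}, \bigvee_k S^{2\rho}]^{\C_2} \longrightarrow \pi_3(\bigvee_k S^2)
\]
is surjective for every $k \geq 1$. By the Hilton--Milnor theorem, $\pi_3(\bigvee_k S^2)$ is generated (under addition) by the compositions $\eta_i \colon S^3 \xrightarrow{\eta} S^2 \hookrightarrow \bigvee_k S^2$ for $i = 1,\dots,k$ and the Whitehead products $\omega_{ij} = [\iota_i, \iota_j]$ for $1 \leq i < j \leq k$. The two constructions already given in the proof of Lemma~\ref{lem rest} provide equivariant lifts for both families: the quaternionic model of $\nu \colon S^{4\rho-1} \to S^{2\rho}$ followed by the inclusion of the $i$-th wedge summand lifts $\eta_i$, while the spherical conjugation structure on $S^{2\rho} \times S^{2\rho}$, positioned as the $i$-th and $j$-th wedge factors, lifts $\omega_{ij}$ via the equivariant Whitehead product $W_{ij} = [\iota_i, \iota_j]^{\C_2\text{-equiv}}$. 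Additivity of the restriction map then finishes the surjection.

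With an equivariant lift $\tilde\alpha$ of $\alpha$ in hand, define $X$ as the equivariant pushout
\[
X \; = \; \bigvee_{i=1}^k S^{2\rho} \; \cup_{\tilde\alpha} \; D^{4\rho}.
\]
By construction, $X$ is a spherical conjugation space in the sense of Example~\ref{ex:spherical}: it is built from a point by attaching conjugation $2$-cells (giving the wedge of $S^{2\rho}$'s) and a single conjugation $4$-cell (equivalently, an $8$-dimensional real cell). Taking $\C_2$-fixed points commutes with pushouts along closed cofibrations, and since $(S^{2\rho})^{\C_2} = S^2$, $(D^{4\rho})^{\C_2} = D^4$, and $\tilde\alpha^{\C_2} = \alpha$, we obtain $X^{\C_2} \simeq \bigvee_k S^2 \cup_\alpha D^4 \simeq M$, as required.

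The only step requiring genuine thought is the generalization of Lemma~\ref{lem rest}, and even there the main obstacle is notational rather than conceptual: one needs that the generators produced by Hilton--Milnor all arise from equivariant maps into sub-wedges with at most two factors, which is immediate. Everything else (existence of the CW-structure, behavior of fixed points under cell attachments, sphericity of the resulting $X$) is formal. Note that we make no use of the smooth structure on $M$ beyond extracting its homotopy type, which is why the conclusion is only that $X^{\C_2}$ realizes $M$ up to homotopy equivalence and not diffeomorphism.
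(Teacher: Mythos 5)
Your proposal is correct and follows essentially the same route as the paper: decompose $M$ up to homotopy as $\bigvee_k S^2\cup_\alpha D^4$, lift the attaching map equivariantly to $S^{4\rho-1}\to\bigvee_k S^{2\rho}$ using the equivariant Hopf map $\nu$ and equivariant Whitehead products, and attach a conjugation $8$-cell. The only difference is that you explicitly carry out the extension of Lemma~\ref{lem rest} from two wedge summands to $k$, a point the paper's proof leaves implicit when it invokes ``the previous lemma.''
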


\begin{proof}
Such a manifold has the homotopy type of a finite wedge of
$2$-spheres with a $4$-cell attached, see for example \cite[proof of Theorem V.1.5]{MR0506372}. 
To construct our homotopy type
as a real locus it is enough to realize it as a wedge of conjugation
$4$-spheres with a conjugation $8$-cell attached. The homotopy type
of the resulting space is determined (as a space with $\C_2$-action)
by the equivariant homotopy class of the attaching map $S^{4\rho-1}
\rightarrow \bigvee S^{2\rho}$. The homotopy type of the real locus
is determined by the (ordinary) homotopy class of the restriction of
this map to the fixed point sets of the source and target spheres.
The theorem is now a consequence of the previous lemma.
\end{proof}

\begin{example}
\label{ex:HP2}
{\rm The equivariant model for the Hopf map $\nu$ in Lem\-ma~\ref{lem rest}
shows that $\mathbb{C} P^2$ is the real locus of the conjugation space $\mathbb{H} P^2$. The $\C_2$-action
on this $8$-dimensional manifold is smooth, which turns $\mathbb{H} P^2$ into a conjugation manifold.
}
\end{example}

\begin{remark}
\label{rem:smooth4}
{\rm We do not know whether every simply connected closed $4$-dimensional manifold is realizable as the real
locus of an $8$-dimensional conjugation \emph{manifold}.
}
\end{remark}

\begin{remark}
\label{rem:Moore}
{\rm Lemma~\ref{lem rest} shows that any attaching map for a $4$-cell on a wedge of $2$-spheres can be realized
equivariantly as the fixed map of an attaching map for a conjugation $8$-cell on a wedge of conjugation $4$-spheres.
Let us generalize this somewhat.
If we discard $p$-torsion for odd primes as discussed in the previous section, the $3$-skeleton of a simply connected
$4$-dimensional CW-complex is a wedge of copies of spheres and Moore spaces $M(\mathbf Z/2^k, 2)$ since the attaching maps on the
$2$-skeleton, a wedge of $2$-spheres, are all of the form $S^2 \xrightarrow{q} S^2 \hookrightarrow \bigvee S^2$ where
$q$ is some integer in $\mathbb Z \cong \pi_2 S^2$. We next attach $4$-cells, the attaching maps represent elements in the
third homotopy group of the $3$-skeleton. By the Hilton-Milnor Theorem again, \cite{MR0068218}, such classes are sums
of elements of three different types:
\begin{enumerate}
\item elements in $\pi_3 S^2 \cong \mathbf Z$, generated by the Hopf map $\eta$;
\item elements in $\pi_3 M(\mathbf Z/2^k, 2) \cong \mathbf Z/2^{k+1}$, \cite[Lemma~1]{MR300280} or \cite[Lemma~2.1]{MR1736922}, generated by the composite
$S^3 \xrightarrow{\eta} S^2 \hookrightarrow M(\mathbf Z/2^k, 2)$, where the last map is the inclusion of the bottom cell;
\item Whitehead products of elements in the second homotopy group of spheres and Moore spaces.
\end{enumerate}
The three types of maps can be realized equivariantly, replacing the Hopf map $\eta$ by the next Hopf map $\nu$, the Moore space $M(\mathbf Z/2^k, 2)$
by $\Sigma^{\rho} (S^\rho \cup_{2^{k-1} \eta} e^{2\rho})$, and Whitehead products by the analogous Whitehead products in a wedge of
$4$-dimensional conjugation spheres and spherical conjugation $3$-cell complexes $\Sigma^{\rho} (S^\rho \cup_{2^{k-1} \eta} e^{2\rho})$. 
}
\end{remark}

\section{Conjugation manifolds with $3$ cells}

In the next section we provide counter examples to the realizability question. We want them to be as small as possible,
first by counting the number of cells, and second by looking at the dimension.
Since a connected $2$-cell complex is a sphere, let us thus move to spaces with
three cells and in order to limit the number of such cell complexes, we restrict ourselves to manifolds. In this
section we present all conjugation manifolds with three cells. We first deal with the problem in the homotopy category
and refine the constructions in the second and third subsections to make them geometrical.

\subsection{Three cell conjugation spaces}
\label{subsec:divalg}
The dimensions of the cells of a manifold with three cells must be $0$, $n$, and $2n$ by Poincar\'e duality, and the
attaching map must have Hopf invariant one. Therefore, $n$ can only be $1$, $2$, $4$, or $8$ by the Hopf
invariant one Theorem proven by Adams, \cite{MR0141119}. These dimensions correspond to the projective spaces
$\R P^2$ over the reals, $\mathbb C P^2$ over the complex numbers, $\mathbb H P^2$ over the quaternions, and $\mathbb O P^2$
over the octonions.

\begin{proposition}
\label{prop:projective}
The projective spaces $\R P^2$, $\mathbb C P^2$, and $\mathbb H P^2$ are realizable as real loci of
conjugation manifolds.
\end{proposition}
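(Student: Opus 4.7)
The plan is to exploit the Cayley--Dickson tower $\mathbb{R} \subset \mathbb{C} \subset \mathbb{H} \subset \mathbb{O}$, realizing each projective plane as the real locus of the next. The first two cases are essentially at hand from earlier in the paper: complex conjugation on $\mathbb{C} P^2$ gives a spherical conjugation manifold with real locus $\mathbb{R} P^2$ (Remark~\ref{rem:2-complexes}), while Lemma~\ref{lem rest} and Example~\ref{ex:HP2} already provide a smooth $\C_2$-action on $\mathbb{H} P^2$ with real locus $\mathbb{C} P^2$. Only the third case, realizing $\mathbb{H} P^2$ itself as a real locus, requires a new construction.

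For this I would promote the pattern one step further by building a smooth $\C_2$-action on the octonionic projective plane $\mathbb{O} P^2$. Writing $\mathbb{O} = \mathbb{H} \oplus \mathbb{H} e$ via Cayley--Dickson, I define $\tau(a + b e) = \tau(a) + \tau(b) e$, where the involution on $\mathbb{H} = \mathbb{C} \oplus \mathbb{C} j$ is the one used in Lemma~\ref{lem rest}, namely simultaneous complex conjugation on each $\mathbb{C}$-factor. A direct computation with the Cayley--Dickson product formula shows that $\tau$ is a ring automorphism of $\mathbb{O}$ which commutes with octonionic conjugation $a + b e \mapsto \bar{a} - b e$ and preserves norms. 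As a real $\C_2$-representation $\mathbb{O}$ is then $4\rho$ and $\mathbb{O}^2$ is $8\rho$.

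I would next check that the standard octonionic Hopf formula $(x, y) \mapsto (2 x \bar{y},\ |x|^2 - |y|^2)$ defines a $\tau$-equivariant map $S^{8\rho - 1} \to S^{4\rho}$; equivariance reduces at once to the compatibility of $\tau$ with octonion multiplication, conjugation, and norm. The fixed subalgebra $\mathbb{O}^\tau$ has real dimension four and, being generated by two elements, is associative by Artin's theorem; as a four-dimensional real associative division subalgebra of $\mathbb{O}$ it must therefore be isomorphic to $\mathbb{H}$. Under this identification, the fixed-point restriction of our equivariant Hopf map is the quaternionic Hopf map $\nu$. Attaching the conjugation cell $D^{8\rho}$ to $S^{4\rho}$ along this equivariant map produces $\mathbb{O} P^2$ with a smooth $\C_2$-action whose real locus is $S^4 \cup_\nu e^8 = \mathbb{H} P^2$.

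The main obstacle is the non-associativity of $\mathbb{O}$: the usual description of the Hopf fibration as a quotient by an action of the ``group'' of unit octonions breaks down. I would sidestep this by working exclusively with the explicit polynomial Hopf formula above, so that every equivariance check reduces to the ring-automorphism property of $\tau$. Smoothness of the $\C_2$-action on $\mathbb{O} P^2$ then follows because $\tau$ is linear on the two standard affine charts $\mathbb{O}^2 \subset \mathbb{O} P^2$ and commutes with the octonionic inverses appearing in the transition maps between them.
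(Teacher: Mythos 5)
Your proposal is correct and follows essentially the same route as the paper: the first two cases are quoted from the earlier examples, and the third is handled by equipping $\mathbb{O}$ with an involution fixing a quaternion subalgebra and building an equivariant model of the octonionic Hopf map $S^{15}\rightarrow S^{8}$ whose mapping cone is $\mathbb{O}P^2$ with real locus $\mathbb{H}P^2$. The only differences are cosmetic: you use the Hopf construction formula $(x,y)\mapsto (2x\bar{y},\,|x|^2-|y|^2)$ where the paper uses $(x,y)\mapsto xy^{-1}\in\mathbb{O}P^1$, and you sketch smoothness via affine charts, whereas the paper concedes that its argument at this point is only homotopical and establishes the smooth statement afterwards via the Jordan-algebra model of $\mathbb{L}P^2$.
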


\begin{proof}
We have seen in Example~\ref{ex:spherical} that $\mathbb C P^2$ is a spherical conjugation manifold with fixed points $\R P^2$. We have also
encountered $\mathbb H P^2$ in Example~\ref{ex:HP2} as conjugation manifold with fixed points $\mathbb C P^2$. We only need to realize
$\mathbb H P^2$, which will be done by defining a suitable $\C_2$-action on $\mathbb O P^2$.

We work with the same notation as \cite[Appendix~A.1.7]{MR516508}.
As an $\R$-algebra $\mathbb O$ has a basis $(1, e_1, \dots, e_7)$ and $(1, e_1, e_2, e_4)$ generate a $4$-dimensional subalgebra
that is isomorphic to $\mathbb H$. We let $\tau$ act trivially on this subalgebra, and extend it linearly on $\mathbb O$ by changing the sign
on $e_3, e_5, e_6$, and $e_7$. We construct now a model for the Hopf map $\sigma\colon S^{15} \rightarrow S^8$ by considering the unit sphere 
$S(\mathbb O \times \mathbb O)$ in $\mathbf O^2$. We define $\sigma$ by sending a pair $(x,y)$ to $xy^{-1} \in \mathbb O P^1$ if $y$ is non-zero
and to the point at infinity if $y=0$. This map is equivariant and induces on fixed points the Hopf map $\nu\colon S^7 \rightarrow S^4$.
The mapping cone is the Cayley projective plane, a $16$-dimensional manifold $\mathbb O P^2$. It is endowed with an involution
whose fixed points is $\mathbb H P^2$.
\end{proof}

The above does not quite prove the smooth statement, only the homotopical one. To show that the above projective planes are
smooth real loci, we need to find a conjugation action on a geometric model for the doubles. We start with:

\subsection{Normed division algebras and projective lines}
\label{subsec:divalg}
By a theorem of Hurwitz \cite{MR1512117} the only normed division algebras are $\mathbb{R}, \mathbb{C}, \mathbb{H}$ and $\mathbb{O}$, 
the real, complex quaternionic and octonionic algebras. There is an easy way to construct these in an inductive way which builds on Hamilton's construction 
of the complex numbers and is due to Cayley and Dickson, \cite{MR1502549}. Recall that a conjugation on a real algebra is a real inner automorphism, written
as conjugation $\overline{a}$, that is idempotent and anti-multiplicative $\overline{ab}=\overline{b}\, \overline{a}$.

Start with the real numbers $\mathbb{R}$, with its usual operations and define the conjugation operation to be the identity.
Given a normed division algebra $\mathbb{K}$ with conjugation and of dimension $\leq 4$, define a new division 
algebra $\mathbb{L}$ as $\mathbb{K} \times \mathbb{K}$  equipped with a component-wise addition, a multiplication given by 
$(a,b)(c,d) = (ac -d\overline{b},\overline{a}d+cd)$, and a conjugation defined by $\overline{(a,b)} = (\overline{a},-b)$.

This conjugation action is of little use for us as it has at each stage the real numbers $\mathbb{R}$ as fixed points. 
Nevertheless we have  a second $\C_2$-action also defined inductively by setting:
\begin{itemize}
	\item on $\mathbb{R},$ $\tau(a) =a$ and on $\mathbb{C},$ $\tau(a)= \overline{a}$;
	\item if $\tau$ is defined on $\mathbb{K}$, extend it diagonally to $\mathbb{L}$ by $\tau(a,b)= (\tau(a),\tau(b))$.
\end{itemize}

We get then from a straightforward computation:

\begin{lemma}\label{lem:exoticconj}
 Given a normed division algebra $\mathbb{K}$, the map $\tau\colon \mathbb{K} \rightarrow \mathbb{K}$ is a $\mathbb C$-conjugate linear 
 and multiplicative involution. Moreover for any $a \in \mathbb{K}$, the equality $\tau(\overline{a})= \overline{\tau(a)}$ holds. \hfill{\qed}
\end{lemma}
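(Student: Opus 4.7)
The natural approach is induction on the stage of the Cayley--Dickson construction. Let me separate the four assertions that need checking simultaneously for each algebra $\mathbb{K}$ in the chain $\mathbb{R}, \mathbb{C}, \mathbb{H}, \mathbb{O}$: (i) $\tau$ is $\mathbb{R}$-linear and involutive, (ii) $\tau$ commutes with the Cayley--Dickson conjugation, i.e.\ $\tau(\overline{a}) = \overline{\tau(a)}$, (iii) $\tau$ is multiplicative, and (iv) $\tau$ is $\mathbb{C}$-conjugate linear. Property (iv) will fall out of (iii) once one knows what $\tau$ does on the embedded copy of $\mathbb{C}$, so the real content is (i)--(iii).

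For the base cases, on $\mathbb{R}$ all of (i)--(iii) are trivial since $\tau$ is the identity and $\overline{a}=a$. On $\mathbb{C}$, $\tau$ is by definition the standard complex conjugation, which is the archetype of a $\mathbb{C}$-conjugate linear multiplicative involution, and it coincides with the Cayley--Dickson conjugation, so (ii) is tautological.

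For the inductive step, suppose (i)--(iii) hold on $\mathbb{K}$, with $\mathbb{L} = \mathbb{K} \times \mathbb{K}$. Property (i) for $\mathbb{L}$ is immediate from the diagonal definition $\tau(a,b) = (\tau(a),\tau(b))$ and $\mathbb{R}$-linearity of $\tau$ on $\mathbb{K}$. Property (ii) follows by a one-line computation: $\tau(\overline{(a,b)}) = \tau(\overline{a},-b) = (\tau(\overline{a}), -\tau(b)) = (\overline{\tau(a)}, -\tau(b)) = \overline{\tau(a,b)}$, where the middle equality uses the inductive hypothesis. The main step is multiplicativity: using the Cayley--Dickson multiplication formula together with the diagonal definition of $\tau$, one expands $\tau((a,b)(c,d))$ componentwise, distributes $\tau$ over sums and products inside $\mathbb{K}$ via the inductive hypothesis, and uses (ii) on $\mathbb{K}$ to move $\tau$ past the bars on the components. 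The result reorganizes exactly as the Cayley--Dickson product $(\tau(a),\tau(b))(\tau(c),\tau(d)) = \tau(a,b)\,\tau(c,d)$.

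Finally, for (iv) observe that $\mathbb{C}$ sits inside $\mathbb{K}$ as the second stage of the Cayley--Dickson tower, and on this subalgebra $\tau$ restricts to ordinary complex conjugation by the base case together with the inductive compatibility. Combined with multiplicativity from (iii), this gives $\tau(za) = \tau(z)\tau(a) = \overline{z}\,\tau(a)$ for all $z \in \mathbb{C}$ and $a \in \mathbb{K}$, which is precisely $\mathbb{C}$-conjugate linearity. I do not expect any serious obstacle here; the only point requiring care is bookkeeping in the multiplicativity computation, where one must apply the inductive hypothesis to each of the four subproducts appearing in the Cayley--Dickson formula and check that the conjugations on the $b$-slot land in the right place.
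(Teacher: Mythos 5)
Your proposal is correct and is precisely the ``straightforward computation'' the paper alludes to before stating the lemma (whose proof is omitted): an induction up the Cayley--Dickson tower, with the key step being multiplicativity via the product formula together with the compatibility $\tau(\overline{a})=\overline{\tau(a)}$ on the previous stage. The deduction of $\mathbb{C}$-conjugate linearity from multiplicativity plus the fact that $\tau$ restricts to conjugation on the embedded copy of $\mathbb{C}$ is also the intended argument, so nothing is missing.
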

Let us look at the action on the associated projective lines. 

Given a normed division algebra $\mathbb{K}$, define the line through the point $(0, 0) \neq (x,y) \in \mathbb{K}\times \mathbb{K}$ 
to be:
\[
[x,y] = \left\{\begin{array}{l}
	\{ (\lambda(y^{-1}x),\lambda) \ | \ \lambda \in \mathbb{K}^\ast \} \textrm{ if }  y \neq 0\\
	\\
	\{ (\lambda, \lambda(x^{-1}y)) \ | \ \lambda \in \mathbb{K}^\ast \} \textrm{ if }  x \neq 0
	\end{array} \right.
\]

The set of lines $[x,y]$ is the projective line $\mathbb{K}P^1$ and can be given canonically the structure of a smooth manifold. 
For the algebras $\mathbb{R}, \mathbb{C}, \mathbb{H}$ and $\mathbb{O}$ this gives the spheres $S^1, S^2, S^4$ and $S^8$ respectively. 
The involution $\tau$ on the double $\mathbb L$ induces an involution on $\mathbb{L}P^1$  by $\tau([x,y]) = [\tau(x),\tau(y)]$.

\begin{proposition}\label{prop:actprojline}
The $\C_2$-action $\tau$ on $\mathbb{L}P^1$ gives it the structure of a conjugation manifold with fixed point set $(\mathbb{L}P^1)^{\C_2}=\mathbb{K}P^1$.
\end{proposition}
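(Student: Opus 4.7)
The plan is to identify $\mathbb{L}P^1$, as a $\C_2$-space, with the representation sphere $S^{n\rho}$ where $n = \dim_{\R}\mathbb{K}$; then Example~\ref{exa:firstexample} immediately provides the conjugation manifold structure we want.

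First I would work in affine coordinates for $\mathbb{L}P^1$. The map $\mathbb{L} \cup \{\infty\} \to \mathbb{L}P^1$ sending $z \mapsto [z,1]$ and $\infty \mapsto [1,0]$ is a diffeomorphism. By the multiplicativity of $\tau$ in Lemma~\ref{lem:exoticconj}---which also yields $\tau(y^{-1}x) = \tau(y)^{-1}\tau(x)$---this map is $\C_2$-equivariant once we extend $\tau$ to $\mathbb{L} \cup \{\infty\}$ by fixing $\infty$. So the $\tau$-action on $\mathbb{L}P^1$ is simply the one induced by $\tau\colon \mathbb{L} \to \mathbb{L}$, and the fixed locus is $\mathbb{L}^{\C_2} \cup \{\infty\}$.

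Next I would analyze $\mathbb{L}$ as a real $\C_2$-representation. The map $\tau$ is $\R$-linear (it is $\mathbb{C}$-conjugate linear by Lemma~\ref{lem:exoticconj}, a fortiori $\R$-linear) and is an involution, so it splits $\mathbb{L}$ into the sum of its $\pm 1$-eigenspaces. A direct induction on the Cayley--Dickson doubling shows that $\mathbb{L}^{\C_2}$ is a real subalgebra of $\mathbb{L}$ isomorphic to $\mathbb{K}$: the base case is $\mathbb{C}^{\C_2} = \R$, and the inductive step follows since in $\mathbb{L} = \mathbb{K} \times \mathbb{K}$ with the diagonal action, the fixed subset is $\mathbb{K}^{\C_2} \times \mathbb{K}^{\C_2}$, on which the Cayley--Dickson multiplication formula reproduces the next step of the construction starting from $\mathbb{K}^{\C_2} \cong$ the predecessor of $\mathbb{K}$. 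In particular $\dim_{\R}\mathbb{L}^{\C_2} = n$, so the $(-1)$-eigenspace also has dimension $n$, and therefore $\mathbb{L} \cong n\rho$ as $\C_2$-representations.

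Combining the two steps yields a $\C_2$-equivariant diffeomorphism $\mathbb{L}P^1 \cong S^{n\rho}$, and Example~\ref{exa:firstexample} then provides the $\HH^*$-frame making it a conjugation manifold (and in fact a spherical conjugation space of the simplest kind). Its fixed point set is $\mathbb{L}^{\C_2} \cup \{\infty\}$, which under the algebra isomorphism $\mathbb{L}^{\C_2} \cong \mathbb{K}$ is precisely $\mathbb{K}P^1$. The main---and rather minor---obstacle is the inductive identification $\mathbb{L}^{\C_2} \cong \mathbb{K}$, a routine algebraic verification that the Cayley--Dickson formula restricts correctly to the fixed subalgebra at each stage; everything else is automatic from the identification with a representation sphere.
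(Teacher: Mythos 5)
Your proof is correct, and its core computation coincides with the paper's. The paper's own proof is much terser: it works in the affine chart $x \neq 0$, notes that a line $[x,y]$ is fixed by $\tau$ exactly when $x^{-1}y$ lies in the fixed subalgebra of $\mathbb{L}$ (identified with $\mathbb{K}$), and leaves the conjugation-space structure itself implicit, relying on the fact that a $2n$-sphere with an involution fixing an $n$-sphere is a conjugation sphere as in Example~\ref{exa:firstexample}. You perform the same fixed-point identification (in the chart $y\neq 0$ rather than $x\neq 0$, which is immaterial), but you add a step the paper omits: since $\tau$ is an $\R$-linear involution on $\mathbb{L}$ whose $+1$-eigenspace $\mathbb{L}^{\C_2}\cong\mathbb{K}$ has dimension $n$, one gets $\mathbb{L}\cong n\rho$ as a real $\C_2$-representation and hence an equivariant diffeomorphism $\mathbb{L}P^1\cong S^{n\rho}$. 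This buys you something concrete: the appeal to Example~\ref{exa:firstexample} becomes fully explicit (the action is linearizable and the $\HH^*$-frame is the standard one on a representation sphere), whereas the paper's proof only verifies the fixed-point set and tacitly defers the conjugation equation to the conjugation-sphere example. The inductive identification $\mathbb{L}^{\C_2}\cong\mathbb{K}$ via the Cayley--Dickson formula, which you flag as the only real verification, is indeed the point both arguments rest on, and you set it up correctly (base case $\mathbb{C}^{\C_2}=\R$, diagonal action on the double).
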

\begin{proof}
The action is given by $\tau[x,y]= [\tau(x),\tau(y)]$. If $x \neq 0$, then $\tau(x) \neq 0$, the condition $\tau([x,y]) = [x,y]$, 
given the description above, translates into:
\[
\exists a \in \mathbb{L}^\ast \quad (a, a(x^{-1}y)) = (1, \tau(x^{-1}y)).
\]
This is equivalent to $a=1$ and $\tau(x^{-1}y) \in \mathbb{K}$.
\end{proof}

\subsection{Projective planes}
\label{subsec:plane}
The case of projective planes is more delicate because of the lack of associativity on $\mathbb{O}$. We define projective spaces via projectors 
on Hermitian spaces, see for example \cite[Appendix~A.6]{MR516508} and discuss only the case of projective planes.

\begin{definition}\label{def:ermatasjordalg}
Given a normed division algebra $\mathbb{K}$, the \emph{Jordan algebra} $\mathfrak{h}_3(\mathbb{K})$ is the space of $3 \times 3$ Hermitian matrices 
with coefficients in $\mathbb{K}$, endowed with the product $a \circ b = \frac{1}{2}(ab +ba)$.
\end{definition}

So an element of $\mathfrak{h}_3(\mathbb{K})$ is a matrix 
$\left(
\begin{matrix}
\alpha & x &  z \\
\overline{x} & \beta & y \\
\overline{z} & \overline{y} & \gamma
\end{matrix}
\right)$,
where $\alpha, \beta, \gamma \in \mathbb{R}$ and $x,y,z \in \mathbb{K}$.

Observe that, because $\tau$ is multiplicative and linear, it is also multiplicative with respect to the Jordan algebra product.
We turn now to the definition of projective spaces in terms of projectors. Recall that a projector is an element such that $p \circ p = p$.

\begin{definition}\label{def:projspaces}
The \emph{projective plane} $\mathbb{K}P^2$ is the subspace of $\mathfrak{h}_3(\mathbb{K})$ consisting of projectors $p$ of trace~$1$.
\end{definition}
 
We conclude from \cite[Theorem~A.6.16]{MR516508} that $\mathbb O P^2$ is a conjugation manifold since the model
given in Definition~\ref{def:projspaces} is homeomorphic with the Cayley projective plane.

\begin{proposition}\label{prop:conjprojspace}
Let $\mathbb{K}$ be a normed division algebra and $\mathbb{L}$ its double. The $\C_2$-action on $\mathfrak{h}_3(\mathbb{L})$ defined by $\tau$ 
coefficient-wise endows $\mathbb{L}P^2$ with the structure of a conjugation manifold with fixed-point set $\mathbb{K}P^2$.	
\end{proposition}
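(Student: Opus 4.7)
My approach is to first verify that the coefficient-wise $\tau$-action preserves $\mathbb{L}P^2$, then identify the fixed-point set with $\mathbb{K}P^2$, and finally establish the conjugation manifold structure by matching the smooth action with a spherical conjugation cell structure built from the Hopf attaching map of Proposition~\ref{prop:projective}.

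For the first step, since $\tau$ fixes $\mathbb{R}$ pointwise, it preserves the diagonal entries of any Hermitian matrix and therefore preserves the trace. Moreover, by the remark preceding Definition~\ref{def:projspaces} (which relies on Lemma~\ref{lem:exoticconj}), $\tau$ is multiplicative for the Jordan product on $\mathfrak{h}_3(\mathbb{L})$, so it sends projectors to projectors. Hence $\tau$ preserves the subset of trace-one projectors, namely $\mathbb{L}P^2$.

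For the second step, a Hermitian matrix in $\mathfrak{h}_3(\mathbb{L})$ is $\tau$-fixed if and only if all its entries are $\tau$-fixed. The diagonal entries lie in $\mathbb{R}$ and are automatically fixed, while the off-diagonal entries, which lie in $\mathbb{L}$, are fixed exactly when they lie in the fixed subalgebra $\mathbb{L}^\tau \subset \mathbb{L}$. The inductive description of $\tau$ in subsection~\ref{subsec:divalg}, combined with Lemma~\ref{lem:exoticconj}, shows that $\mathbb{L}^\tau$ is an $\mathbb{R}$-subalgebra of $\mathbb{L}$ of real dimension $\dim_{\mathbb{R}} \mathbb{K}$, and in fact isomorphic to $\mathbb{K}$ as a normed division algebra (one checks the three cases $\mathbb{C}, \mathbb{H}, \mathbb{O}$ in turn). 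Consequently $\mathfrak{h}_3(\mathbb{L})^{\C_2} = \mathfrak{h}_3(\mathbb{L}^\tau) \cong \mathfrak{h}_3(\mathbb{K})$, and the fixed points of $\mathbb{L}P^2$ are precisely the trace-one projectors in $\mathfrak{h}_3(\mathbb{K})$, namely $\mathbb{K}P^2$.

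The third step is where I expect the main difficulty. The inclusion of the $2\times 2$ upper-left block gives a $\tau$-equivariant embedding $\mathbb{L}P^1 \hookrightarrow \mathbb{L}P^2$ as the locus of projectors onto lines in the first two coordinates; its complement is an open cell of real dimension $2\dim_{\mathbb{R}}\mathbb{L}$. By Proposition~\ref{prop:actprojline}, $\mathbb{L}P^1$ with this action is a conjugation sphere, equivariantly identified with the representation sphere $S^{(\dim_{\mathbb{R}}\mathbb{K})\rho}$, and one checks that the attaching map of the top cell is exactly an equivariant model of the Hopf map $S^{(\dim_{\mathbb{R}}\mathbb{L})\rho - 1} \to S^{(\dim_{\mathbb{R}}\mathbb{K})\rho}$ of Proposition~\ref{prop:projective}. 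This amounts to verifying that the homeomorphism with the Cayley projective plane of \cite[Theorem~A.6.16]{MR516508} can be chosen $\C_2$-equivariantly, which is natural because both the Hermitian and the Hopf models are built from the same Cayley--Dickson data on which $\tau$ acts coefficient-wise. Granting this, $\mathbb{L}P^2$ is a spherical conjugation manifold.
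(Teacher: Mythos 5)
Your first two steps match the paper's (terse) opening: $\tau$ is multiplicative for the Jordan product and commutes with the octonionic conjugation, so it preserves trace-one projectors, and the fixed points are the trace-one projectors over the fixed subalgebra $\mathbb{L}^\tau\cong\mathbb{K}$; your identification of $\mathbb{L}^\tau$ is actually spelled out more carefully than in the paper. Where you diverge is the third step. The paper does not pass through the Hopf map of Proposition~\ref{prop:projective} at all: it stratifies $\mathbb{L}P^2$ directly by the vanishing of the diagonal entries, getting the point $\{\beta=\gamma=0\}$, the locus $\{\gamma=0\}\cong\mathbb{L}P^1$ as the closure of a conjugation $r$-cell ($r=\dim_{\mathbb{R}}\mathbb{L}$), and the open $2r$-cell $\{\gamma\neq 0\}$, whose explicit parametrization in \cite[Prop.~A.6.16]{MR516508} is written in the Jordan-algebra coordinates on which $\tau$ acts coefficient-wise, so its equivariance (and hence the conjugation-cell structure) is visible by inspection. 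Your route instead asks for an equivariant identification of the abstract homeomorphism between the projector model and the mapping cone of the Hopf map, and you explicitly leave that as ``granting this.'' That is the one real soft spot: the equivariance of that homeomorphism is not needed and is harder to certify than the statement you want, whereas staying inside the projector model, as the paper does, makes the top cell's equivariant parametrization immediate. A second, smaller point: Proposition~\ref{prop:actprojline} gives $\mathbb{L}P^1$ the structure of a conjugation manifold, but to use it as the $r$-skeleton of a spherical structure you need the slightly stronger statement that it is the closure of a conjugation $r$-cell (equivariantly a ball in $\mathbb{C}^{r/2}$ with complex conjugation); this again comes for free from the explicit coordinates on $\{\gamma=0\}$ rather than from the cited proposition alone.
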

\begin{proof}
Since $\tau$ commutes with conjugation and is compatible with the Jordan product, the fixed point set is $\mathbb{K}P^2$. 
Thus $\tau$ sends projectors to projectors.

Let $r=\dim_\mathbb{R} \mathbb{L}$.  Imposing the condition $\beta = \gamma = 0$ forces our idempotent to be the zero matrix,
a $0$-dimensional cell which is obviously stable by conjugation. Next, the condition $ \gamma =0$ yields projectors of the form
$\left(
\begin{matrix}
\alpha & x &  0 \\
\overline{x} & \beta & 0 \\
0 & 0 & 0
\end{matrix}
\right)$, with $\alpha$ and $x$ such that $\alpha^2 + ||x||^2 =1$.
This subspace is homeomorphic to $S^r = \mathbb{L}P^1$, and is the closure of a conjugation $r$-cell.

Finally, the condition $\gamma \neq 0$, gives an open $2r$-cell (see the explicit calculations in \cite[Prop A.6.16]{MR516508} for 
the octonionic plane, they also work for the other cases). Again, this cell is stable under conjugation. In this way we exhibited
$\mathbb L P^2$ as a spherical conjugation space.
\end{proof}

\begin{remark}
	The construction explained above generalizes to show that the spaces $\mathbb{C}P ^n$ and $\mathbb{H}P ^n$ are spherical conjugation 
spaces for $n \geq 3$ with fixed points the spaces $\mathbb{R}P ^n$ and $\mathbb{C}P ^n$ respectively. The classical CW-decomposition where 
the unique $kr$-cell is obtained by imposing the last $k$-homogeneous coordinates to be non-zero is a decomposition by conjugation cells.  
 \end{remark}
\section{Small non realizable spaces}
\label{sec:nonrealizable}
There exists a three cell complex which cannot be realized as a real locus,
neither as the fixed points of a smooth $\C_2$-action on a manifold, nor as the fixed points
of any conjugation space. The counter-example is the octonionic projective plane $\mathbb{O}P^2$  of course.
We then move on to find smaller counterexamples, dimensionwise.

\begin{theorem}\label{them:op2non}
The octonionic projective plane $\mathbb{O}P^2$ is not a real locus.
\end{theorem}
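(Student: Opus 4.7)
The plan is to assume that a conjugation space $X$ satisfies $X^{\C_2} \simeq \mathbb{O}P^2$ and derive a contradiction with Adams' Hopf invariant one theorem \cite{MR0141119}.

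First I will determine $\HH^*(X;\bF)$ through the $H^*$-frame. Writing $\HH^*(\mathbb{O}P^2;\bF) = \bF[a]/(a^3)$ with $|a|=8$ and setting $A := \kappa^{-1}(a) \in \HH^{16}(X;\bF)$, the fact that $\kappa$ is a ring isomorphism (Theorem~\ref{thm:ring}) will give $A^2 = \kappa^{-1}(a^2) \neq 0$, so $\HH^{2*}(X;\bF) = \bF[A]/(A^3)$ with $|A|=16$.

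Next I will identify the Steenrod action. Since $\mathbb{O}P^2$ is the mapping cone of the octonionic Hopf map $\sigma\colon S^{15}\to S^8$, which has Hopf invariant one, we have $Sq^8 a = a^2$. Applying the Franz--Puppe compatibility (Theorem~\ref{thm:Steenrod}) then yields
\[
\kappa(Sq^{16}A) \;=\; Sq^8\kappa(A) \;=\; Sq^8 a \;=\; a^2 \;=\; \kappa(A^2),
\]
so $Sq^{16}A = A^2 \neq 0$ in $\HH^{32}(X;\bF)$.

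Then I will reduce to a two-cell complex via $2$-completion. By Remark~\ref{prop 2compete}, the Bousfield--Kan completion $X^\wedge_2$ is simply connected with torsion-free integral cohomology concentrated in degrees $0$, $16$ and $32$. It is therefore $2$-locally equivalent to a CW-complex of the form $S^{16}\cup_f e^{32}$, and the non-triviality of $Sq^{16}$ in its mod~$2$ cohomology forces $f\colon S^{31}\to S^{16}$ to have Hopf invariant one. Adams' theorem then excludes such a map since $16 \notin \{1,2,4,8\}$, giving the desired contradiction.

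The step I expect to require the most care is the passage to the two-cell complex: I must ensure that the mod~$2$ cohomology of the conjugation space $X$ is concentrated in even degrees (a fact built into the $H^*$-frame formalism and used in Remark~\ref{prop 2compete}), so that $X^\wedge_2$ is genuinely $15$-connected with a single cell in each of dimensions $16$ and $32$. Once this is established, the remaining input is purely formal: the ring property of $\kappa$, its Steenrod compatibility, and the classical computation of $Sq^8$ on $\mathbb{O}P^2$.
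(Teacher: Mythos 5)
Your proposal is correct and follows essentially the same route as the paper: the ring isomorphism $\kappa$ forces $\HH^*(X;\F)\cong\F[A]/(A^3)$ with $|A|=16$, which Adams' Hopf invariant one theorem forbids. The extra machinery you introduce (invoking Franz--Puppe to get $Sq^{16}A=A^2$, which already follows from the unstability axiom since $|A|=16$, and the $2$-completion reduction to a two-cell complex) is not needed, as Adams' theorem applies directly to any space with cohomology vanishing between degrees $16$ and $32$ and $Sq^{16}$ acting non-trivially.
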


\begin{proof}
The mod $2$ cohomology of $\mathbb{O}P^2$ is isomorphic to
$\mathbf{F}_2[x]/(x^3)$ with $x$ in degree~$8$. If it where the real
locus of a conjugation space $X$, then, by the compatibility of $\HH^*$-frame with the ring structure, 
Theorem~\ref{thm:ring},  the space $X$ would have the 
same cohomology but with a generator in degree $16$ which is impossible
by Adams' solution to the Hopf invariant one problem, \cite{MR0141119}.
\end{proof}

The octonionic projective plane is a minimal conjugation space, in the sense that it is built with the minimal number of cells, namely three. 
However, it is a $16$-dimensional manifold.
Relying on Floyd's work \cite{MR0334256}, we exhibit a smaller counter-example from the point of view of its dimension. There
exists a $10$-dimensional manifold $Y$ with four cells which cannot be realized as a real locus. 

In his work Floyd proves that there are only two non-trivial unoriented cobordism classes that contain 
manifolds with four cells. They occur in dimensions $5$ and $10$,  \cite[Theorem~3.1]{MR0334256}. 
The larger one is the one we are interested in.

\begin{example}\label{ex:FloydManifold}
{\rm The $10$-dimensional Floyd manifold is constructed as follows.
One starts with $S^3 \times_{S^1} \mathbb {H}P^2$. The action of the circle $S^1 \subset \mathbb C$  on $S^3 \subset \mathbb C \oplus \mathbb C$ 
is given by multiplication on each factor, and on $ \mathbb {H}P^2$ it is induced by the the action on $\mathbb{H} = \mathbb C \oplus j \mathbb C$ given 
by $\alpha \cdot(z +jw) = z + j \alpha w$. On $\mathbb{H} P^2$ there is a point fixed under $\C_2$ and  $S^1$, for instance the point 
with homogeneous quaternionic coordinates $[1:1:1]$. We therefore get an embedded sphere $S^2$ in  $S^3 \times_{S^1} \mathbb {H}P^2$
that splits the fibration. Floyd proceeds, \cite[Lemma~3.5]{MR0334256}, by surgering out this sphere and obtains a manifold $Y$, whose mod $2$ cohomology is four 
dimensional, on classes $1, e_4, e_6$, and $e_{10} = e_4 \cdot e_6$.
}
\end{example}

\begin{theorem}\label{thm:Floyd}
The $10$-dimensional manifold Floyd $Y$ is not a real locus.
\end{theorem}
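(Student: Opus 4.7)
The plan is to suppose, for contradiction, that $Y \simeq X^{\C_2}$ for some conjugation space $X$, and to derive the impossibility from Floyd's extension \cite{MR0334256} of Adams' Hopf-invariant-one theorem to four-cell complexes.

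First, by Theorem~\ref{thm:ring} the isomorphism $\kappa \colon \HH^{2\ast}(X) \to \HH^\ast(Y)$ is multiplicative, so the mod-$2$ cohomology ring of $X$ is the degree-doubled version of that of $Y$: it has $\bF$-basis $\{1, \tilde e_8, \tilde e_{12}, \tilde e_{20}\}$ concentrated in degrees $0, 8, 12, 20$, with unique non-trivial product $\tilde e_8 \cdot \tilde e_{12} = \tilde e_{20}$. Thus $X$ has the cohomology ring of $S^8 \times S^{12}$, with the top class realized as a cup product.

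Next, I would compute the Steenrod squares on $\HH^\ast(Y)$ using the construction of Example~\ref{ex:FloydManifold}. Before surgery, the $\mathbb{H}P^2$-bundle $E = S^3 \times_{S^1} \mathbb{H}P^2$ over $S^2$ has low-degree cohomology generated by the fiber class $u \in \HH^4$ (with $Sq^4 u = u^2$) and the pullback $v \in \HH^2$ of the base class. The surviving classes $e_4, e_6, e_{10} \in \HH^\ast(Y)$ correspond to $u$, $uv$, $u^2 v$. The non-triviality of the $S^1$-action on $\mathbb{H}P^2$ twists the bundle and forces $Sq^2 u = uv$ in $\HH^\ast(E)$, whereupon the Cartan formula combined with $Sq^4 u = u^2$ yields $Sq^4(uv) = u^2 v$. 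These relations descend through Floyd's surgery, giving $Sq^2 e_4 = e_6$ and $Sq^4 e_6 = e_{10}$ in $\HH^\ast(Y)$. By Theorem~\ref{thm:Steenrod}, the doubled operations $Sq^4 \tilde e_8 = \tilde e_{12}$ and $Sq^8 \tilde e_{12} = \tilde e_{20}$ then hold in $\HH^\ast(X)$.

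The space $X$ would therefore be a four-cell CW-complex with cells in dimensions $0, 8, 12, 20$, exhibiting simultaneously the cup product $\tilde e_8 \cdot \tilde e_{12} = \tilde e_{20}$ and the ``double Hopf invariant one'' Steenrod pattern above. This is exactly the cohomological template addressed by Floyd \cite[Theorem~3.1]{MR0334256}: four-cell complexes whose cohomology takes the shape (cells $0, n, 3n/2, 5n/2$ with $Sq^{n/2}$, $Sq^n$ non-trivial and top class a cup product) exist non-trivially only for $n = 2$ (the companion complex $Z$, cells $0, 2, 3, 5$) and $n = 4$ ($Y$ itself). The case $n = 8$ is excluded by a secondary-operation argument parallel to Adams' exclusion of $\mathbb{O}P^2$ as a minimal Hopf-invariant-one example, which contradicts the existence of $X$.

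The main obstacle is the computation of $Sq^2 e_4 = e_6$ on $Y$: this rests on the twisting of the bundle $E$ (equivalently, on the non-vanishing of a Stiefel--Whitney class of the vertical tangent bundle of $E \to S^2$) and on the verification that the operation is preserved by Floyd's surgery. With this in hand, applying \cite[Theorem~3.1]{MR0334256} at the doubled dimensions yields the contradiction at once.
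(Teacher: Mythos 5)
Your proposal follows essentially the same route as the paper: assume $Y\simeq X^{\C_2}$, use Theorem~\ref{thm:Steenrod} to conclude that $\HH^*(X)$ would be the degree-doubled unstable module $\Phi\HH^*Y$ with $Sq^4\tilde e_8=\tilde e_{12}$ and $Sq^8\tilde e_{12}=\tilde e_{20}$, and then invoke Floyd's secondary-operation non-existence result for this cohomological pattern in dimension $20$. The only differences are cosmetic: the paper quotes the relations $Sq^2e_4=e_6$ and $Sq^4e_6=e_{10}$ directly from Floyd rather than rederiving them from the bundle $S^3\times_{S^1}\mathbb{H}P^2$, and cites \cite[Lemma~3.4]{MR0334256} (the non-existence statement for the doubled module) rather than Theorem~3.1.
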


\begin{proof}
The proof uses the compatibility of the frame with the Steenrod algebra, Theorem~\ref{thm:Steenrod}. 
The unstable module structure on $\HH^\ast Y$ is completely described by the fact that $Sq^2 e_4 = e_6$ and $Sq^4 e_6 = e_{10}$.
If $Y$ were the real locus of a $20$-dimensional conjugation space, its cohomology would then be isomorphic to $\Phi \HH^\ast Y$,
the double of this unstable module in the sense of \cite[Section~1.7]{MR1282727}.
The claim follows from the fact that no $20$-dimensional manifold with the appropriate mod $2$ cohomology, as a module over the 
Steenrod algebra, can exist, \cite[Lemma~3.4]{MR0334256}.
\end{proof}

\begin{remark}\label{rem:Floyd}
{\rm
This $10$-dimensional manifold holds today the record from the point of view of the dimension.
We do not know of any space or manifold of lower dimension which cannot be realized as
real locus either in the topological or the smooth category. In order to make further progress
on this question, it seems necessary to find obstructions which involve the $\HH^*$-frame
or purely geometrical ingredients.

However we believe that the $5$-dimensional Floyd manifold $Z$ cannot be realized as a real locus. The construction
of this manifold parallels that of $Y$, starting from $S^1 \times_{\C_2} \mathbb CP^2$ before surgery. The mod $2$
cohomology is four dimensional on classes $1, f_2, f_3$ and $f_5 = f_2 \cdot f_3$ with Steenrod operations $Sq^1 f_2 = f_3$
and $Sq^2 f_3 = f_5$ connecting the generators. 

Hence the manifold $Z$ can be doubled in the sense of Definition~\ref{def:double}, and the doubling $\kappa$ is actually
compatible both with the ring structure and the unstable module structure, \cite[Lemma~3.5]{MR0334256}. Why do we believe 
that $Z$ cannot be realized as the fixed points $Y^{\C_2}$ for some $\C_2$-action on the $10$-dimensional Floyd manifold?
It is true that $S^1 \times_{S^0} \mathbb {C}P^2$ is the real locus of $S^3 \times_{S^1} \mathbb {H}P^2$, but
the surgery obstruction does not vanish equivariantly.
}
\end{remark}

Let us conclude this section by observing that stably there is no obstruction. We could define formally what a conjugation spectrum is, but in view of
our needs (and our work in \cite{PRS}) we will only use pure $\C_2$-equivariant spectra, i.e. cellular spectra constructed from representation cells
corresponding to multiples of the regular representation $\rho$.

\begin{proposition}\label{thm:Floyd}
The suspension spectrum of the $5$-dimensional Floyd manifold $Z$ is the real locus of a $\C_2$-action on the suspension spectrum
of the $10$-dimensional Floyd manifold $Y$.
\end{proposition}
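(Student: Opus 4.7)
The plan is to build a pure $\C_2$-equivariant cell spectrum $\tilde Y$ with representation cells $S^0, S^{2\rho}, S^{3\rho}, S^{5\rho}$, whose underlying spectrum recovers the stable cell structure of $Y$ (in dimensions $0,4,6,10$) and whose geometric fixed points recover the stable cell structure of $Z$ (in dimensions $0,2,3,5$). The dimensions line up because $S^{n\rho}$ has underlying dimension $2n$ and fixed-point dimension $n$.

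First I would attach the $2\rho$-cell trivially to $S^0$, producing $\tilde Y^{(2\rho)}=S^{2\rho}$. Next I would attach the $3\rho$-cell by the equivariant map $\varphi_1\colon S^{3\rho-1}\to S^{2\rho}$ obtained by smashing the equivariant complex Hopf map $S^{2\rho-1}\to S^{\rho}$ (used in Remark~\ref{rem:2-complexes}) with $S^{\rho}$. Its underlying map is $\Sigma^2\eta\colon S^5\to S^4$, yielding the relation $Sq^2 e_4=e_6$ on cohomology, while its restriction to geometric fixed points is a suspension of the degree-$2$ self-map of $S^1$, yielding $Sq^1 f_2=f_3$. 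Hence $\tilde Y^{(3\rho)}$ has underlying spectrum $\Sigma^\infty (S^4\cup_\eta e^6)$ and geometric fixed points the stable mod $2$ Moore spectrum $\Sigma^\infty M(\mathbf Z/2,2)$.

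The critical step is the construction of the last attaching map $\varphi_2\colon S^{5\rho-1}\to \tilde Y^{(3\rho)}$. Its composition with the Puppe quotient $\tilde Y^{(3\rho)}\to S^{3\rho}$ should be the suspension by $\rho$ of the equivariant quaternionic Hopf map $\nu_{\mathbf H}\colon S^{4\rho-1}\to S^{2\rho}$ featured in the proof of Lemma~\ref{lem rest}: the underlying projection is then $\Sigma^2\nu$, producing $Sq^4 e_6=e_{10}$ together with the multiplicative relation $e_4\cdot e_6=e_{10}$, while the projection on geometric fixed points is $\Sigma\eta$, producing $Sq^2 f_3=f_5$. The obstructions to lifting $\Sigma^{\rho}\nu_{\mathbf H}$ through the cofiber sequence $S^{2\rho}\to \tilde Y^{(3\rho)}\to S^{3\rho}$ lie in certain equivariant stable homotopy groups of $S^{2\rho}$; showing that these obstructions vanish is the main obstacle, and is the stable counterpart of the surgery obstruction alluded to in Remark~\ref{rem:Floyd}. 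Vanishing is plausible because the unstable-module structures on $\HH^*(Y)$ and $\HH^*(Z)$ are already compatible under doubling, so the required Toda-bracket or secondary-operation calculation in the $\C_2$-equivariant stable category should succeed.

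Granting such a $\varphi_2$, the cofiber $\tilde Y = \tilde Y^{(3\rho)}\cup_{\varphi_2}e^{5\rho}$ is a pure $\C_2$-equivariant spectrum by construction. Its underlying and geometric fixed-point spectra are four-cell complexes whose cells and attaching data match those of $\Sigma^\infty Y$ and $\Sigma^\infty Z$, and Floyd's classification of such small complexes identifies them with $\Sigma^\infty Y$ and $\Sigma^\infty Z$ respectively, giving the desired realization.
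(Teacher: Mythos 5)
Your overall architecture is the same as the paper's: build a pure $\C_2$-cell spectrum with cells on multiples of $\rho$, attach the middle cell by (a suspension of) the equivariant complex Hopf map $\tilde\eta$, attach the top cell by an equivariant lift of (a suspension of) the equivariant quaternionic Hopf map $\tilde\nu$, and then identify the underlying spectrum with $\Sigma^\infty Y$ and the fixed points with $\Sigma^\infty Z$. But you have left the entire mathematical content of the proposition unproved. The existence of the lift $\varphi_2$ is not an incidental technicality to be waved through as ``plausible'': it is the whole point. The obstruction is completely concrete --- it is the composite of $\Sigma^{\rho}\tilde\nu$ with the connecting map of the cofiber sequence, i.e.\ (up to suspension) $\Sigma\tilde\eta\circ\Sigma^{\rho}\tilde\nu$, an element of the single $RO(\C_2)$-graded stable stem $\pi_{3\rho-2}^S$ --- and the proof consists of observing that this group vanishes, by the computations of Araki--Iriye (see also Dugger--Isaksen's table of $\C_2$-equivariant stable stems). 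Your stated reason for expecting the vanishing, namely that the unstable module structures of $\HH^*(Y)$ and $\HH^*(Z)$ are compatible under doubling, is precisely the kind of evidence that Remark~\ref{rem:Floyd} warns is insufficient: unstably the doubling $\kappa$ is compatible with both the ring and the Steenrod structure, and yet the (unstable) realization is expected to fail because the surgery obstruction does not vanish equivariantly. So compatibility of Steenrod actions cannot by itself certify that the equivariant Toda bracket is zero; you must actually compute the group.

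Two smaller points. First, once $\varphi_2$ exists there is a genuine choice (the lifts are a torsor over $\pi_{3\rho-1}^S\cong\mathbf Z/2$, and on fixed points over $\pi_2^S\cong\mathbf Z/2$), and one must check that some choice produces the non-trivial $Sq^2$ on the fixed-point spectrum; the identification of the underlying spectrum with $\Sigma^\infty Y$ uses that \emph{non}-equivariantly there is no indeterminacy at all, since $\pi_4^S=\pi_5^S=0$, together with the presence of $Sq^4$ in $\HH^*(Y)$ to rule out the trivial attaching map. Second, ``Floyd's classification of such small complexes'' is not the right tool for the identification step: Floyd classifies cobordism classes of manifolds with few cells, not stable homotopy types of cell spectra; the identification has to be made by pinning down the attaching maps as just described.
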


\begin{proof}
It follows from the computations of Araki and Iriye, \cite{MR656233}, see also Dugger and Isaksen's \cite[Table~1]{MR3652813},
that $\pi_{3 \rho -2}^S = 0$. In particular, if we call $\tilde \eta \in \pi_{\rho -1}^S$ the equivariant lift of the Hopf map $\eta$ and $\tilde \nu \in \pi_{2 \rho -1}^S$
that for the Hopf map $\nu$, we conclude that the composite $\Sigma \tilde \eta \circ \Sigma^\rho \tilde \nu$ is null, hence $\Sigma^\rho \tilde \nu$ factorizes non-trivially
through $g\colon S^{3 \rho -1} \rightarrow S^0 \cup_{\tilde \eta} e^{\rho}$ in two different ways, parametrized by $\pi_{3\rho -1}^S \cong \mathbb Z/2$. 
By using any of them as an attaching map we get a stable three cell complex $S^0 \cup e^{\rho} \cup_g e^{3\rho}$. 

Forgetting the $\C_2$-action this  must be the suspension spectrum (up to $\Sigma^{2\rho}$ suspension to get the dimension right) 
of the $10$-dimensional Floyd manifold  for the following reason. Non-equivariantly there is no indeterminacy in constructing
$g$ since both $\pi_4^S$ and $\pi_5^S$ are trivial. Hence the map $g$ is, forgetting the action, the only non-trivial attaching map for a $6$-cell on
$\Sigma^{-2} \mathbb C P^2$. The presence of the Steenrod square $Sq^4$ in the Floyd manifold shows that the attaching map in the suspension
spectrum is indeed non-trivial.

Since the fixed points of the Hopf map $\tilde \eta$ is the degree $2$ map, the fixed point spectrum is a complex $S^0 \cup_2 e^1 \cup e^3$.
The attaching map for the $3$-cell is understood just as above and is parametrized by $\pi_2^S \cong \mathbb Z/2$ (its is also well-known
that $\pi_2^S \mathbb R P^2 \cong \mathbb Z/2$). The non-zero one yields a non-trivial $Sq^2$ in cohomology.
\end{proof}

In Remark~\ref{rem:Moore} we explained how to realize simply-connected $4$-dimensional $2$-local complexes as real loci
of $3$-connected $8$-dimensional conjugation spaces. It seems difficult to deal with arbitrary $5$-dimensional spaces. In fact even
the $5$-dimensional Floyd manifold seems out of reach from this viewpoint since we would have to find out if the attaching map of the top
cell, representing an element in $\pi_4 \Sigma \mathbb RP^2$, can be realized equivariantly in $[S^{4 \rho -1}, \Sigma^\rho \mathbb CP^2]^{\C_2}$.
However even the explicit description of the generator of $\pi_4 \Sigma \mathbb RP^2 \cong \mathbf Z/4$, given by Wu in 
\cite[Proposition~6.5]{MR1955357}, does not admit an obvious ``doubling''.

\section{Doubling cobordisms and structure cobordisms}
As stated in Theorem~\ref{thm:Steenrod}, in a conjugation space $X$ the ring homomorphism is in fact a morphism of unstable algebras, 
up to a readjustment in degrees. As an immediate consequence for manifolds we proved in \cite[Theorem~A.1]{MR3082744} that the Wu 
classes  and the Stiefel-Whitney classes correspond via the doubling isomorphism~$\kappa_0$. In particular the non-equivariant unoriented 
cobordism class of a conjugation manifold is determined by that of its fixed points. One could hope to find a number $n$ such that the cobordism ring cannot contain \emph{any} conjugation manifold in even dimension $2n$, except
for the zero cobordant ones, as this would show that no non-zero cobordant $n$-dimensional manifold could be a real locus.
This strategy to find non-realizable manifolds (as real loci) turns out to be hopeless as we explain next.

\begin{theorem}\label{thm:conjgeneratecobord}
Every non-equivariant and non-oriented cobordism class contains the fixed locus of a conjugation manifold.
\end{theorem}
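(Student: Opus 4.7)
The plan is to realize a generating family of the unoriented cobordism ring $\mathcal{N}_*$ as real loci of conjugation manifolds, and then conclude by a ring-closure argument. The preliminary observation is that if $X_1,X_2$ are conjugation manifolds with fixed loci $M_1,M_2$, then so are $X_1\sqcup X_2$ and $X_1\times X_2$, the $\HH^*$-frame on the product being the tensor product via K\"unneth, with fixed loci $M_1\sqcup M_2$ and $M_1\times M_2$. Hence the collection $R$ of cobordism classes realized as real loci of conjugation manifolds is a subring of $\mathcal{N}_*$.

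By Thom's theorem together with Milnor's analysis, $\mathcal{N}_*$ is generated as an $\F_2$-algebra by the even real projective spaces $[\R P^{2k}]$ ($k\geq 1$) and the Milnor hypersurfaces $[H_{m,n}]$ ($1\leq m\leq n$), where $H_{m,n}\subset \R P^m\times \R P^n$ is the smooth hypersurface cut out by $\sum_{i=0}^m x_iy_i=0$. The first family is immediate from Example~\ref{ex:spherical}: $\R P^{2k}$ is the real locus of the spherical conjugation manifold $\mathbb{C}P^{2k}$. For the Milnor hypersurfaces I would complexify. Set $\mathbb{H}_{m,n}\subset \mathbb{C}P^m\times \mathbb{C}P^n$ to be the smooth complex projective manifold cut out by the same bilinear equation read over $\mathbb{C}$. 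Since the coefficients are real, $\mathbb{H}_{m,n}$ is stable under the product conjugation, and its fixed locus is precisely $H_{m,n}$. Geometrically, $\mathbb{H}_{m,n}$ is the projectivization $\mathbb{P}(V)$ of the rank-$n$ sub-bundle $V\subset \underline{\mathbb{C}^{n+1}}$ over $\mathbb{C}P^m$ whose fibre at $[x]$ is the hyperplane $\sum x_iy_i=0$; in particular $V$ is a conjugate-equivariant complex vector bundle defined over $\R$ on the spherical conjugation manifold $\mathbb{C}P^m$.

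The main obstacle is then showing that $\mathbb{H}_{m,n}=\mathbb{P}(V)$ is a spherical conjugation manifold with real locus $\mathbb{P}_\R(V_\R)=H_{m,n}$. I would verify this by tracking the Schubert decomposition: $\mathbb{C}P^m$ is built out of conjugation cells $e^{i\rho}$ for $i=0,\ldots,m$, over each of which $V$ trivializes equivariantly, so $\mathbb{P}(V)|_{e^{i\rho}}$ is an equivariant product $e^{i\rho}\times \mathbb{C}P^{n-1}$ admitting a further filtration by conjugation cells; gluing over the skeleta of $\mathbb{C}P^m$ presents $\mathbb{P}(V)$ as a spherical conjugation manifold in the sense of Example~\ref{ex:spherical}, with fixed locus the corresponding real projective bundle, namely $H_{m,n}$. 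An alternative, more conceptual route goes through \cite{PRS}: the equivariant projective bundle formula decomposes $\HH\underline{\F}\wedge \mathbb{H}_{m,n}$ into regular-representation suspensions of $\HH\underline{\F}\wedge \mathbb{C}P^m$, exhibiting its purity, which is equivalent to carrying a conjugation structure. With all the Thom-Milnor generators now placed inside $R$, closure under products and sums yields $R=\mathcal{N}_*$, as desired.
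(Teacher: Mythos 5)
Your proof is correct in outline, but after the common first step (realizing $\mathbb{R}P^{2k}$ as the real locus of $\mathbb{C}P^{2k}$ and closing up under disjoint union and cartesian product) it genuinely diverges from the paper's. For the odd-dimensional polynomial generators the paper uses Dold's manifolds $P(m,n)=S^m\times_{\C_2}\mathbb{C}P^n$ and doubles them to $DP(m,n)=S^{2m+1}\times_{S^1}\mathbb{H}P^n$, invoking \cite[Proposition~5.3]{MR2171799} to see that this $\mathbb{H}P^n$-bundle over $\mathbb{C}P^m$ is a conjugation manifold; you instead take the real Milnor hypersurfaces $H_{m,n}$ and double them by complexification. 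Your generating set is fine: for even $d$ one needs $\mathbb{R}P^d$ anyway (in dimension $4$, say, no Milnor hypersurface is indecomposable), and for odd $d\neq 2^t-1$ the formula $s_{m+n-1}(H_{m,n})=\binom{m+n}{m}\bmod 2$ (valid for $m,n\geq 2$) produces an indecomposable $H_{2^a,\,d+1-2^a}$. The complexification $\mathbb{H}_{m,n}$ is visibly conjugation-stable with fixed locus $H_{m,n}$. The one step that is thinner than it should be is the verification that $\mathbb{P}(V)$ is a \emph{spherical} conjugation manifold: trivializing $V$ equivariantly over each conjugation cell of $\mathbb{C}P^m$ and ``gluing over the skeleta'' does not by itself yield equivariant attaching maps from representation spheres for the product cells --- this is the usual difficulty of endowing the total space of a bundle with a product CW structure, and would need either compatible trivializations over closed cells with an induction over skeleta, or a Bia\l ynicki-Birula/Schubert decomposition of the variety $\mathbb{H}_{m,n}$ defined over $\mathbb{R}$. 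The cleanest repair is exactly the tool the paper uses for $DP(m,n)$: \cite[Proposition~5.3]{MR2171799} applies to a $\C_2$-equivariant bundle with conjugation-space fibre over a conjugation-space base satisfying the Leray--Hirsch condition, which $\mathbb{P}(V)\to\mathbb{C}P^m$ does; your alternative purity argument via \cite{PRS} also closes the gap. As for what each route buys: the paper's Dold manifolds are covered directly by the cited literature and come with an explicit smooth double, while your Milnor hypersurfaces are algebraic varieties defined over $\mathbb{R}$, so the involution and its real locus are immediate and the method suggests a general principle for real forms of complex varieties.
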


\begin{proof}
Thom showed that real projective spaces $\mathbb R P^{2n}$ provide generators in even dimensions in \cite{MR61823}. We have already
seen that they are smooth real loci of the corresponding complex projective spaces $\mathbb C P^{2n}$. Dold
constructed the famous ``Dold manifolds'' in \cite{MR79269} to complete the set of generators in odd dimensions. They are
orbit spaces $P(m, n) = S^m \times_{\C_2} \mathbb C P^n$ where the action is by the antipodal map on the sphere and by
conjugation on the complex projective space. 

These as well are smooth real loci since there is a general construction that is analogous to what Floyd described for doubling
$S^1 \times_{\C_2} \mathbb CP^2$, see Remark~\ref{rem:Floyd}. Define $DP(m, n) = S^{2m+1} \times_{S^1} \mathbb HP^n$, where
$S^1$ acts on the unit sphere $S^{2m+1}$ in $\mathbb C^{m+1}$ by mutliplication on each component and on each quaternionic component
$v + j w \in \mathbb H$ (where $v, w \in \mathbb C$), by multiplication on $w$ only. The conjugation action on $\mathbb C^{m+1}$ and
on $v$ and $w$ is compatible with the circle action, so that $DP(m, n)$ inherits an action of~$\C_2$. The fixed points are $P(m, n)$.
An application of \cite[Proposition~5.3]{MR2171799} shows that $DP(m, n)$ is a conjugation manifold. Finally observe that the generating 
operations of the cobordism ring: connected sum, disjoint union, and cartesian product preserve conjugation spaces (cf. \cite[Section 4]{MR2171799}).
\end{proof}

\bibliographystyle{plain}\label{biblography}

\end{document}